\documentclass[12pt]{amsart}

\usepackage{pdfsync}         
\usepackage[active]{srcltx}  

\usepackage{enumerate}
\usepackage{comment}
\usepackage{amssymb}

\usepackage[OT1]{fontenc}    %
\usepackage[usenames]{color}


\usepackage{palatino}
\usepackage[left=3cm,top=3.8cm,right=3cm]{geometry}        

\usepackage{xcolor}
\usepackage[pagebackref]{hyperref}
\hypersetup{
    colorlinks,
    linkcolor={red!60!black},
    citecolor={blue!60!black},
    urlcolor={blue!90!black}
}

\numberwithin{equation}{section}

\theoremstyle{plain}
\newtheorem{theorem}{Theorem}[section]

\newtheorem{prop}[theorem]{Proposition}
\newtheorem{lemma}[theorem]{Lemma}

\theoremstyle{remark}
\newtheorem{remark}[theorem]{Remark}

\theoremstyle{definition}

\newcommand{\e}{\varepsilon}
\newcommand{\N}{\mathbb{N}}
\newcommand{\R}{\mathbb{R}}
\newcommand{\Z}{\mathbb{Z}}
\newcommand{\dist}{\mathrm{dist}}

\newcommand{\cP}{\mathcal{P}}
\newcommand{\cM}{\mathcal{M}}
\newcommand{\cL}{\mathcal{L}}

\newcommand{\cN}{\mathcal{N}}
\newcommand{\cD}{\mathcal{D}}
\newcommand{\cG}{\mathcal{G}}
\newcommand{\cH}{\mathcal{H}}
\newcommand{\cE}{\mathcal{E}}
\newcommand{\cF}{\mathcal{F}}

\DeclareMathOperator{\hdim}{dim_H}
\DeclareMathOperator{\pdim}{dim_P}

\DeclareMathOperator{\ubdim}{\overline{\dim}_B}

\DeclareMathOperator{\supp}{supp}

\newcommand{\wh}{\widehat}
\newcommand{\wt}{\widetilde}

\title{On the Hausdorff dimension of pinned distance sets}

\author{Pablo Shmerkin}
\address{Department of Mathematics and Statistics, Torcuato Di Tella University, and CONICET, Buenos Aires, Argentina}
\email{pshmerkin@utdt.edu}
\urladdr{http://www.utdt.edu/profesores/pshmerkin}
\thanks{P.S. was partially supported by Project PICT 2014-1480 (ANPCyT)}

\subjclass[2010]{Primary: 28A75, 28A80; Secondary: 49Q15}
\keywords{distance sets, pinned distance sets, Hausdorff dimension, packing dimension, Falconer's problem}

\begin{document}

\begin{abstract}
We prove that if $A$ is a Borel set in the plane of equal Hausdorff and packing dimension $s>1$, then the set of pinned distances $\{ |x-y|:y\in A\}$ has full Hausdorff dimension for all $x$ outside of a set of Hausdorff dimension $1$ (in particular, for many $x\in A$). This verifies a strong variant of Falconer's distance set conjecture for sets of equal Hausdorff and packing dimension, outside the endpoint $s=1$.
\end{abstract}

\maketitle

\section{Introduction}

\subsection{Background and main result}

Given a set $A\subset\R^d$, with $d\ge 2$, let
\[
\Delta(A)=\{|x-y|:x,y\in A\}
\]
be its distance set. A conjecture of Falconer, dating back to \cite{Falconer85}, states that if $\hdim(A)\ge d/2$, then $\hdim(\Delta(A))=1$, where $\hdim$ stands for Hausdorff dimension. This is open in every dimension $d\ge 2$ (it is easy to see that it fails for $d=1$). From now on, we focus on the planar case, which is the best understood. The current best progress towards Falconer's conjecture in the plane was obtained by Bourgain \cite{Bourgain03} (relying crucially on earlier work of Katz-Tao \cite{KatzTao01}) and by Wolff \cite{Wolff99a}. Assuming only that $\hdim(A)=1$, Bourgain-Katz-Tao proved that $\hdim(\Delta(A))\ge 1/2+\e$, where $\e>0$ is a very small absolute constant (without the $\e$ this bound is due to Falconer \cite{Falconer85}). Under the stronger assumption that $\hdim(A)>4/3$, Wolff proved that $\Delta(A)$ has positive Lebesgue measure.

These results are very deep and have not been improved in well over a decade (see, however, \cite{IosevichLiu17,HambrookIosevichRice17} for recent progress on closely related problems). So, in order to make progress towards the conjecture, one reasonable approach is to try to get stronger conclusions for special classes of sets. The distance set conjecture was established for many dynamically defined sets, such as self-similar and some self-affine sets (\cite{HochmanShmerkin12, Orponen12, FergusonFraserEtAl15}). This is still a very restricted class. Recall that a set $A\subset \R^2$ is called \emph{Ahlfors-David regular} (or AD-regular) with exponent $s$ if
\[
C^{-1} r^s \le \cH^s(A\cap B(x,r)) \le C r^s  \quad\text{for all }x\in A \text{ and } r\in (0,1],
\]
where $\cH^s$ denotes $s$-dimensional Hausdorff measure (in particular, such sets have Hausdorff dimension $s$). Although AD-regular sets are still uniform in terms of size, they allow for far more spatial flexibility than, say, self-similar sets. In \cite{Orponen17}, T. Orponen proved that if $A$ is an AD-regular set of exponent $s\ge 1$, then the \emph{packing} dimension of $\Delta(A)$ is $1$ (for the definition and main properties of packing dimension, see \cite[Chapter 3]{Falconer14}). This falls short of proving Falconer's conjecture for AD-regular sets, since Hausdorff dimension is smaller than packing dimension in general (very roughly speaking, a set has large packing dimension if it is large at infinitely many scales, while in order to have large Hausdorff dimension it must be large at \emph{all} small scales). Further partial progress was achieved in \cite{Shmerkin17}: it was shown there that if $A$ is AD-regular of exponent $s>1$, then the modified lower box-counting dimension of $\Delta(A)=1$. We refer to \cite{Shmerkin17} for the precise definition of  modified lower box-counting dimension, and simply note that it lies between Hausdorff and packing dimension.

A variant of Falconer's distance set problem involves \emph{pinned} distance sets $\Delta_x(A)=\{|x-y|:y\in A\}$. To the best of our knowledge, it may be possible that if $\hdim(A)\ge 1$, then there is $x\in A$ such that $\hdim(\Delta_x A)=1$; of course, this would imply Falconer's conjecture. Peres and Schlag \cite[Corollary 8.4]{PeresSchlag00} proved that if $A\subset\R^2$ is a Borel set with $\hdim(A)>3/2$, then $\Delta_x(A)$ has positive Lebesgue measure outside of a set of $x$ of dimension at most $3/2$ (in particular, for most $x\in A$). Again, one can try to get better results for special classes of sets. In \cite[Corollaries 1.2 and 1.5]{Shmerkin17}, several results were established on the box dimensions of pinned distance sets of AD-regular sets of dimension $>1$, and this left open the question of whether box dimension could be replaced with Hausdorff dimension.

Let $\pdim$ denote packing dimension. If $A$ is AD-regular of exponent $s$, then both the Hausdorff and packing dimensions of $A$ equal $s$. On the other hand, there are many sets for which $\hdim(A)=\pdim(A)$ which are far from AD-regular. To emphasize the qualitative difference between both classes, let $S$ denote the set of natural numbers $n$ such that $(2k)^2 \le n<(2k+1)^2$ for some $k\in\N$, and let
\[
A = \left\{ \sum_{n=1}^\infty a_n 2^{-n}: a_n\in \{0,1\}, a_n=0 \text{ for } n\in S \right\}.
\]
Then $\hdim(A)=\pdim(A)=1/2$ (this is standard), but $A$ is not AD-regular. Indeed, one can check that the arithmetic sums $A+\cdots+A$ all have (Hausdorff and packing) dimension $1/2$, while it is known that $\hdim(A+A)>\hdim(A)$ if $A$ is AD-regular. This latter fact follows from the additive part of Bourgain's sum-product theory \cite{Bourgain03, Bourgain10} that established the $1/2+\e$ bound for distance sets. Similar examples can be constructed in the plane.

Here we prove that if $A\subset\R^2$ has (equal) Hausdorff and packing dimension $s>1$, then $\Delta_x(A)$ has full Hausdorff dimension for all $x$ outside of a set of Hausdorff dimension $1$:
\begin{theorem} \label{thm:distance-set}
Let $A\subset\R^2$ be a Borel set with $\hdim(A)=\pdim(A)=s>1$. Then
\[
 \hdim\{ x\in\R^2: \hdim(\Delta_x(A))<1  \} \le 1.
\]
In particular, there exists $x\in A$ such that $\hdim(\Delta_x(A))=1$.
\end{theorem}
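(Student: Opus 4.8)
The plan is to reduce Theorem~\ref{thm:distance-set} to a quantitative single-scale estimate for pinned distance sets and then to upgrade that estimate to a statement about Hausdorff dimension by an induction on scales; the hypothesis $\hdim(A)=\pdim(A)$ is used precisely to make this upgrade possible, while $s>1$ enters because one is then above the critical exponent $1$ for planar projections. For the reduction, fix a compact $A'\subseteq A$ and a probability measure $\mu$ on $A'$ with $\hdim(\mu)>1$; writing $\pi_x(y)=|x-y|$, the measure $\pi_x\mu$ is supported on $\Delta_x(A)$, so $\hdim(\Delta_x A)\ge\hdim(\pi_x\mu)$. Since $\{x:\hdim(\Delta_x A)<1\}$ is the countable union over $k$ of the sets $\{x:\hdim(\pi_x\mu)\le 1-1/k\}$, countable stability of Hausdorff dimension reduces the theorem to showing $\hdim\{x:\hdim(\pi_x\mu)\le t\}\le 1$ for each fixed $t<1$. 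It is here that $\pdim(A)=s$ enters: combined with $\hdim(A)=s$, a (standard) regularization allows $\mu$ to be chosen so that it is, in a suitable multiscale sense, uniformly $s$-dimensional --- Frostman of some exponent $>1$ at every scale, and comparable to an Ahlfors-regular measure of exponent $>1$ at a set of scales robust enough for the iteration below (e.g.\ at all scales outside a set of zero logarithmic density). This two-sided multiscale control, not merely positivity of the dimension, is what separates the present argument from the box-dimension results of \cite{Shmerkin17}.

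\emph{Linearization at the square-root scale.} To analyze $\pi_x\mu$ at resolution $\delta$, cover $\supp\mu$ by balls $B(y_i,\sqrt\delta)$; on each of them, and for pins $x$ at distance $\asymp 1$ from $A'$ (which may be assumed after rescaling and localizing), the map $y\mapsto|x-y|$ agrees up to an error $O(\delta)$ with the affine map $y\mapsto|x-y_i|+\langle y-y_i,\,u_{x,y_i}\rangle$, where $u_{x,y_i}=(y_i-x)/|y_i-x|$. Hence, up to the linearization error, the $\delta$-scale behavior of $\Delta_x(A)$ is governed by that of the orthogonal projections of $\mu|_{B(y_i,\sqrt\delta)}$ in the directions $u_{x,y_i}$, and the key feature is that these directions sweep out a non-degenerate arc as $y_i$ ranges over $\supp\mu$ --- the only obstruction, $\supp\mu$ lying on a line through $x$, being ruled out by $\hdim(\mu)>1$. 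Thus the dimension of $\Delta_x(A)$ is controlled by linear projections of $\mu$ in a one-parameter family of directions that depends on the pin $x$.

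\emph{The single-scale estimate, and the main obstacle.} The core of the proof is a discretized pinned-distance projection theorem with the sharp exceptional exponent, of the following type: fix $t<1$; if $\mu$ is $\delta$-discretized and resembles a regular measure of exponent $s>1$, and if for every $x$ in a set $X$ the distance set $\Delta_x(\mu)$ can be covered by at most $\delta^{-t}$ intervals of length $\delta$, then $X$ can be covered by at most $\delta^{-1}$ balls of radius $\delta$, with implied constants independent of $\delta$. The gap $t<1$ is a genuine hypothesis --- it is what allows an inverse theorem to be brought to bear --- but, crucially, it does not degrade the exponent $1$ in the conclusion. I expect this single-scale estimate to be by far the hardest part. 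Its natural proof is by contradiction through an inverse theorem: a simultaneous near-collapse of all of the distance sets $\Delta_x(\mu)$, $x\in X$, must force $\mu$ at many scales into near-coincidence with pencils of lines and circles and force $X$ into a near-one-dimensional configuration, contradicting the genuine $s$-dimensionality of $\mu$ with $s>1$. Establishing such an inverse theorem is where the heavier machinery is needed --- the additive-combinatorial circle of ideas surrounding the sum-product theorem of Bourgain \cite{Bourgain03,Bourgain10}, most efficiently deployed as an inverse theorem for the decay of $L^q$-norms under convolution, which is what simultaneously yields the optimal exponent $1$ and remains robust under the pin-dependence of the family of projection directions.

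\emph{Assembling the scales, and conclusion.} Suppose towards a contradiction that $\hdim\{x:\hdim(\pi_x\mu)<1\}>1$. Pigeonholing over the value of the dimension, we obtain $t<1$ and a compact set $X$ with $\hdim(X)>1$ and $\hdim(\pi_x\mu)\le t$ for all $x\in X$; fix a Frostman measure $\nu$ on $X$ of some exponent $1+\eta>1$. A (by now fairly standard) induction-on-scales argument then derives a contradiction: using the multiscale regularity of $\mu$ from the first step (which guarantees long stretches of consecutive ``good'' scales) together with the fact that $\hdim(\pi_x\mu)\le t$ produces concentration of $\Delta_x(\mu)$ at many scales, one bootstraps the single-scale estimate across ranges of scales into a covering of $X$ showing $\hdim(X)\le 1$, contradicting $\hdim(X)>1$ --- at the crudest level, at a single well-chosen good scale $\delta$ the estimate covers $X$ (inside a unit ball carrying positive $\nu$-mass) by $\le\delta^{-1}$ balls of radius $\delta$, whereas $\nu(B(\cdot,\delta))\lesssim\delta^{1+\eta}$ forces $\gtrsim\delta^{-1-\eta}$ of them. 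Hence $\hdim\{x:\hdim(\Delta_x A)<1\}\le 1$. The final assertion is now immediate: if $A$ were contained in this exceptional set we would get $s=\hdim(A)\le 1$, contrary to hypothesis, so $\hdim(\Delta_x A)=1$ for some $x\in A$ --- in fact for $\mu$-almost every $x\in A'$, since a set of Hausdorff dimension at most $1$ is $\mu$-null.
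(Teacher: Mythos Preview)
Your proposal is not a proof: the ``single-scale estimate'' you isolate is left entirely unproved. You write that you \emph{expect} it to be the hardest part and gesture towards Bourgain's sum-product circle and inverse theorems for $L^q$-norms under convolution, but you do not state a precise inverse theorem, do not explain how the pin-dependence of the directions $u_{x,y_i}$ is absorbed, and do not explain how the gap $t<1$ is converted into the sharp exceptional exponent $1$ rather than $1+o(1)$ or something worse. A statement of the strength you claim --- $|X|_\delta\le\delta^{-1}$ whenever $|\Delta_x(\mu)|_\delta\le\delta^{-t}$ for all $x\in X$, uniformly in $\delta$ --- is not in the literature you cite, and it is not clear it follows from those tools without substantial new work. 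Your multiscale regularization step is also too vague: equality of Hausdorff and packing dimension does \emph{not} produce a measure that is ``comparable to an Ahlfors-regular measure'' outside a set of scales of zero logarithmic density; what it gives is the two-sided Frostman/box-counting control in \eqref{eq:almost-exact-dimensional}, which is weaker and has to be used more carefully.

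The paper's route is genuinely different and avoids sum-product entirely. The key new ingredient is a \emph{quantitative radial projection theorem} (Theorem~\ref{thm:large-direction-set}): for measures $\mu,\nu$ with finite $s$-energy, $s>1$, a definite $(\mu\times\sigma)$-proportion of pairs $(x,\theta)$ have $|\nu_{\theta,x}|\ge\tau$. Combined with the classical Marstrand--Sobolev bound $\int\|\mu_\theta\|_{2,\gamma}^2\,d\sigma(\theta)\approx\cE_{1+2\gamma}(\mu)$, this yields (Theorem~\ref{thm:good-base-point}) a vantage point $y\in\supp\nu$ and a set $A_1$ of positive $\mu$-measure on which, for \emph{every} hyperdyadic scale $m_j=\lfloor(1+\e)^j\rfloor$ past some $j_0$, the direction $\theta(x,y)$ is ``good'' for the localized measure $\mu^{x,j}$ in the sense $\|\mu^{x,j}_{\theta(x,y)}\|_2^2\le 2^{\e d_j}\cE_{s_1}(\mu^{x,j})$. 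The equal-dimension hypothesis is used, via the multiscale energy formula of Lemma~\ref{lem:multiscale-energy}, to bound $\cE_{s_1}(\mu^{x,j})$ by $2^{O(\delta m_j)}$ on most cubes. Feeding this into the entropy linearization of Proposition~\ref{prop:entropy-of-pinned-dist-measures} gives $H_{m_k}(\Delta_y\wt\mu)\ge 1-O(\e)$ for suitable restrictions $\wt\mu$ of $\mu$, and a pigeonholing over hyperdyadic scales converts this into $\hdim(\Delta_y A)\ge t$. No discretized inverse theorem is needed; the sharp exceptional exponent $1$ comes directly from the fact that $\nu$ (supported on the putative exceptional set $B$) must have $\cE_{s'}(\nu)<\infty$ for some $s'>1$ in order for Theorem~\ref{thm:large-direction-set} to apply.
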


This verifies the strong form of Falconer's distance set conjecture, outside of the endpoint $s=1$, for this class of sets. Of course, it is enough to assume that $A$ contains a set of Hausdorff and packing dimension $s>1$. With this observation, this theorem recovers, generalizes and improves the results on distance sets from \cite{Orponen12, FergusonFraserEtAl15, Orponen17, Shmerkin17}, again outside of the endpoint. We also point out that packing dimension is smaller than upper box counting (Minkowski) dimension, so equality of Hausdorff and box dimension also implies the conclusion of the theorem.

\subsection{About the proof}

We make some remarks about the proof and, particularly, how it relates to previous approaches in the literature. In many recent papers about (linear or non-linear) projections, such as \cite{PeresShmerkin09, HochmanShmerkin12, Orponen12, FergusonFraserEtAl15, Orponen17, Shmerkin17}, the strategy is to apply some version of Marstrand's Projection Theorem to a multi-scale decomposition of a suitable measure $\mu$ supported on $A$ (the set being projected). While the details vary, in all of the cited papers the scales $m_j$ in the multi-scale decomposition form an arithmetic progression, and this is central to the respective methods. Although we still use projection theorems on a multi-scale decomposition of $\mu$, one of the key features of this work is that we are able to make $m_j$ rapidly increasing, and this is crucial for obtaining results for Hausdorff dimension instead of box or packing dimension, and also for widening the class of AD-regular sets to sets with equal Hausdorff and packing dimension. More precisely, we will take $m_j=2^{\lfloor (1+\e)^j\rfloor}$ for a small but fixed $\e>0$. These ``hyperdyadic'' scales were also used to study distance sets in \cite{KatzTao01}.

In the special case of pinned distance projections $y\mapsto  |x-y|$, another key ingredient is the study of directions. Given $x\neq y\in\R^2$, let $\theta(x,y)=x-y/|x-y|\in S^1$ denote the direction determined by $x$ and $y$. Since the gradient of $y\mapsto |x-y|$ is precisely $\theta(x,y)$  one needs to know that, in some sense, $A$ determines many directions (or, for pinned distance sets $\Delta_x(A)$, that there are many directions between $x$ and points in $A$). To be more precise, one needs to know that $\theta(x,y)$ is often a ``good'' direction in the sense of a projection theorem, for the restriction of $\mu$ to a small cube containing $y$. In the study of distance sets, these sets of good directions are always large, but otherwise unknown. The strategy of \cite{Orponen12, Orponen17}, which is also used in \cite[Theorem 1.1]{Shmerkin17}, is to employ the fact that if $A$ has Hausdorff dimension $>1$ (or, more generally, is purely unrectifiable) then for most points $y\in A$, the set of directions $\{\theta(x,y):x\in A\}$ is dense. This is not quantitative enough to allow the scales $m_j$ to grow quickly. In \cite{MattilaOrponen16}, Mattila and Orponen proved (among other things) that given a set $A$ with $\hdim(A)>1$, the image of $A$ under the radial projection $x\mapsto \theta(x,y)$ has positive Lebesgue measure, for all $y$ outside of a set of Hausdorff dimension $1$. Inspired by this, we establish in Section \ref{sec:directions} a similar but more quantitative result. This enables us to find a suitable ``vantage point'' $x$ for which we can prove that the pinned distance set is large; see Theorem \ref{thm:good-base-point}.

A problem that many of the previously cited papers have to contend with is that Marstrand-type projection theorems are intimately related to energies, but energies do not have a nice multi-scale decomposition, while entropy does. Also, energy is much more sensitive to ``bad'' pieces of the measure with small mass. The main role of the assumption of equal Hausdorff and packing dimension is to provide a good multi-scale decomposition of energy, and hence a near-optimal projection theorem at each scale. This allows us to carry more of the argument with energies and $L^2$ norms instead of entropies (although entropy still plays a major role). In particular, we exploit the trivial but important fact that $L^2$ norms and energies are monotone, in the sense that restricting a measure to a subset cannot increase these quantities.

\section{Notation}
\label{sec:notation}

We set up some notation, especially concerning measures.

By $A\lesssim B$ we mean $0\le A\le CB$ for some constant $C>0$; if $C$ depends on some parameter, this is sometimes denoted by a subscript, i.e. $A\lesssim_\e B$ means $0\le A\le C(\e)B$. We write $A\gtrsim B$ for $B\lesssim A$, and $A\approx B$ for $A\lesssim B\lesssim A$, again with the possibility of using subscripts to make dependencies explicit.

We denote the family of all Borel probability (resp. Radon) measures on a metric space $X$ by $\cP(X)$ (resp. $\cM(X)$). If $f:X\to Y$ and $\mu\in\cM(X)$, the \emph{push-forward measure} $f\mu$ is defined as $f\mu(A)=\mu(f^{-1}A)$ for all measurable $A$. (This is often denoted by $f_\#\mu$.)

If $\mu\in\cM(X)$ and $\mu(A)>0$, then $\mu|_A$ is the restriction of $\mu$ to $A$ and, provided also $\mu(A)<\infty$, we also denote the normalized restriction by $\mu_A=\tfrac{1}{\mu(A)}\mu|_A$.

We work in an ambient dimension $d$; this will always be $1$ or $2$ in this paper.  We denote by $\cD_k^{(d)}$ the partition of $\R^d$ into half-open dyadic cubes
\[
\left\{  [j_1 2^{-k}, (j_1+1)2^{-k})\times \cdots\times [j_d 2^{-k}, (j_d+1)2^{-k}): j_1,\ldots,j_d\in\Z \right\}.
\]
We drop the superindex $(d)$ when it is clear from context.

Given $\mu\in\cP(\R^d)$ and $m\in\N$, we write
\[
\mu^{(m)} = \sum_{Q\in\cD_m} \mu(Q) \cL_Q,
\]
where $\cL$ denotes Lebesgue measure on $\R^d$. That is, $\mu^{(m)}$ is a discretized version of $\mu$ at scale $2^{-m}$. When $Q \in \cD_m$, we also denote
\[
\mu^Q = T_Q\mu_Q,
\]
where $T_Q$ is the homothety renormalizing $Q$ back to $[0,1)^d$. We sometimes shorten $(\mu^Q)^{(m)}$ to $\mu^{Q,(m)}$ for convenience.

If $\mu\in\cM(\R^d)$ is absolutely continuous, we denote its density also by $\mu$.

We have already introduced the notation $\hdim,\pdim$ for Hausdorff and packing dimension in the introduction. We will denote upper box counting (or Minkowski) dimension by $\ubdim$. We refer to \cite[Chapters 2 and 3]{Falconer14} for the definitions and basic properties of these notions of dimension.

\section{Preliminaries: energy, entropy, and projections}
\label{sec:preliminaries}

In this section we present some preliminary results related to energies, entropy, and their behaviour under projections. Most of the material is standard, but in order to make the paper self-contained, we give complete proofs of most statements.

On the cube $[0,1)^d$ we consider the dyadic metric: $d(x,y)=2^{-|x\wedge y|}$, where $|x\wedge y|=\max\{ j: y\in D_j(x)\}$. We denote the $s$-energy of a measure $\mu$ on $[0,1)^d$ with respect to this metric by
\[
\cE_s(\mu) = \iint 2^{s|x\wedge y|} \,d\mu(x)d\mu(y).
\]
The following well-known representation of $\cE_s$ will be useful.
\begin{lemma} \label{lem:energy-to-correlation} If $\mu\in\cP([0,1)^d)$ and $s\in (0,d)$, then
\[
\cE_s(\mu) \approx_s \sum_{j=1}^\infty 2^{sj} \sum_{Q\in\cD_j} \mu(Q)^2.
\]
\end{lemma}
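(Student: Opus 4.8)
The plan is to reduce the identity to an elementary estimate for geometric sums, after interchanging a sum with a double integral. First I would rewrite the right-hand side as an integral against $\mu\times\mu$: since the cubes of $\cD_j$ partition $\R^d$,
\[
\sum_{Q\in\cD_j}\mu(Q)^2 = \iint\left(\sum_{Q\in\cD_j}\mathbf{1}_Q(x)\mathbf{1}_Q(y)\right)d\mu(x)\,d\mu(y) = (\mu\times\mu)\left\{(x,y):|x\wedge y|\ge j\right\},
\]
the last equality because the dyadic cubes are nested, so $x$ and $y$ lie in a common cube of $\cD_j$ exactly when $|x\wedge y|\ge j$. Multiplying by $2^{sj}$, summing over $j\ge 1$, and applying Tonelli's theorem (all terms are nonnegative), the right-hand side equals $\iint g(|x\wedge y|)\,d\mu(x)\,d\mu(y)$, where $g(k)=\sum_{j=1}^{k}2^{sj}$ for integers $k\ge 0$ (so $g(0)=0$) and $g(\infty)=\infty$.

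It then remains to compare $g(k)$ with $2^{sk}$, the integrand of $\cE_s(\mu)$. Since $s>0$, for every integer $k\ge 1$ one has $2^{sk}\le g(k)=\tfrac{2^{s(k+1)}-2^{s}}{2^{s}-1}\le\tfrac{2^{s}}{2^{s}-1}\,2^{sk}$, so $g(k)\approx_{s}2^{sk}$; the lone discrepancy is at $k=0$, where $g(0)=0$ while $2^{s\cdot 0}=1$. The upper bound $g(k)\le\tfrac{2^{s}}{2^{s}-1}2^{sk}$ is valid for all $k\ge 0$ and integrates at once to $\sum_{j\ge 1}2^{sj}\sum_{Q\in\cD_j}\mu(Q)^2\le\tfrac{2^{s}}{2^{s}-1}\cE_s(\mu)$. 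For the reverse inequality I would split the integral defining $\cE_s(\mu)$ according to whether $|x\wedge y|=0$ or $|x\wedge y|\ge 1$: on the second set $2^{s|x\wedge y|}\le g(|x\wedge y|)$, and the first set contributes at most its $\mu\times\mu$-measure, which is at most $1$; hence $\cE_s(\mu)\le 1+\sum_{j\ge 1}2^{sj}\sum_{Q\in\cD_j}\mu(Q)^2$. This additive $1$ is then absorbed into the sum, because already its $j=1$ term is $2^{s}\sum_{Q\in\cD_1}\mu(Q)^2\ge 2^{s-d}$ by Cauchy--Schwarz; since $d\in\{1,2\}$ is fixed, all implied constants depend only on $s$, and the two inequalities together give the claimed equivalence.

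I do not anticipate a real obstacle: the whole argument is elementary. The only points needing a little care are the legitimacy of exchanging the sum over $j$ with the double integral (which is Tonelli, the summands being nonnegative), the treatment of the boundary term $k=0$ — equivalently, the fact that the sum on the right-hand side starts at $j=1$ and hence does not directly record the total mass of $\mu$ — and the harmless case $|x\wedge y|=\infty$ (i.e. $x=y$), in which both sides of the equivalence may equal $+\infty$ simultaneously.
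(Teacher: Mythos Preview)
Your proposal is correct and follows essentially the same route as the paper: both arguments rewrite $\sum_{Q\in\cD_j}\mu(Q)^2$ as $\iint \mathbf{1}_{\{|x\wedge y|\ge j\}}\,d\mu\,d\mu$, interchange sum and integral by Tonelli, and reduce the comparison to a geometric-sum estimate. The only cosmetic difference is that the paper telescopes $2^{s|x\wedge y|}=1+\sum_{j=1}^{|x\wedge y|}(2^{sj}-2^{s(j-1)})$ to obtain the exact identity $\cE_s(\mu)=1+(1-2^{-s})\sum_{j\ge 1}2^{sj}\sum_{Q\in\cD_j}\mu(Q)^2$, whereas you bound $g(k)=\sum_{j\le k}2^{sj}$ against $2^{sk}$ directly and then absorb the additive $1$ via Cauchy--Schwarz on the $j=1$ term; both ways handle the $k=0$ boundary issue and yield the same constants up to harmless factors.
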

\begin{proof}
We compute:
\begin{align*}
\cE_s(\mu) &= \iint \left(1+\sum_{j=1}^{|x\wedge y|} 2^{s j}-2^{s(j-1)}\right) \,d\mu(x)d\mu(y)\\
&= 1 +\iint \sum_{j=1}^\infty \sum_{Q\in\cD_j} (2^{sj}-2^{s(j-1)}) \mathbf{1}_Q(x)\mathbf{1}_Q(y)\, d\mu(x)d\mu(y)\\
&= 1 + \sum_{j=1}^\infty (1-2^{-s})2^{sj} \sum_{Q\in\cD_j} \mu(Q)^2.
\end{align*}
\end{proof}
As a first application, we have:
\begin{lemma} \label{lem:energy-of-discretization}
Let $0<s<d$. Then
\[
\cE_s(\mu^{(m)}) \approx_{d,s} \sum_{j=1}^m 2^{sj} \sum_{Q\in\cD_j} \mu(Q)^2.
\]
\end{lemma}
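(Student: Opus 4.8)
The plan is to apply Lemma~\ref{lem:energy-to-correlation} to the probability measure $\mu^{(m)}$ and to control the resulting sum $\sum_{j\ge 1}2^{sj}\sum_{Q\in\cD_j}\mu^{(m)}(Q)^2$ by splitting the range of $j$ at the threshold $m$. The first step is to record precisely how $\mu^{(m)}$ distributes mass among dyadic cubes. Since by definition $\mu^{(m)}$ replaces, for each $Q'\in\cD_m$, the mass $\mu(Q')$ by $\mu(Q')$ times normalized Lebesgue measure on $Q'$, we get: if $j\le m$ then each $Q\in\cD_j$ is a union of cubes of $\cD_m$, so $\mu^{(m)}(Q)=\mu(Q)$; and if $j>m$ then each $Q\in\cD_j$ is contained in a unique $Q'\in\cD_m$, so $\mu^{(m)}(Q)=2^{-d(j-m)}\mu(Q')$.

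Given this, the portion $1\le j\le m$ of the sum in Lemma~\ref{lem:energy-to-correlation} applied to $\mu^{(m)}$ is literally $\sum_{j=1}^m 2^{sj}\sum_{Q\in\cD_j}\mu(Q)^2$, i.e.\ the right-hand side we want. So the lemma reduces to showing that the tail $j>m$ is dominated by this (in fact by its $j=m$ term). For a fixed $j>m$, grouping the cubes of $\cD_j$ by their ancestor in $\cD_m$ --- each ancestor having exactly $2^{d(j-m)}$ descendants --- gives $\sum_{Q\in\cD_j}\mu^{(m)}(Q)^2 = 2^{d(j-m)}\cdot 2^{-2d(j-m)}\sum_{Q'\in\cD_m}\mu(Q')^2 = 2^{-d(j-m)}\sum_{Q'\in\cD_m}\mu(Q')^2$. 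Multiplying by $2^{sj}$ and summing over $j>m$ leaves a geometric series with ratio $2^{s-d}$, which converges precisely because $s<d$, and whose sum is $\approx_{d,s} 2^{sm}\sum_{Q'\in\cD_m}\mu(Q')^2$. Since this is comparable to the $j=m$ term already present in $\sum_{j=1}^m 2^{sj}\sum_{Q\in\cD_j}\mu(Q)^2$, including it only affects the total by a multiplicative constant depending on $d$ and $s$, which gives the claim.

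This argument is entirely routine; the only places that need a moment's attention are the two-case computation of $\mu^{(m)}(Q)$ (depending on whether $Q$ is coarser or finer than scale $2^{-m}$) and the convergence of the geometric series over the fine scales, where the hypothesis $s<d$ is exactly what is needed --- without it the extra mass that discretization pushes onto fine scales would dominate and the stated comparison would fail.
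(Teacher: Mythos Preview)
Your proof is correct and follows exactly the same approach as the paper: apply Lemma~\ref{lem:energy-to-correlation} to $\mu^{(m)}$, use that $\mu^{(m)}(Q)=\mu(Q)$ for $Q\in\cD_j$ with $j\le m$, and for $j>m$ compute $\mu^{(m)}(Q)$ from the ancestor in $\cD_m$ and sum the resulting geometric series (which converges since $s<d$). The paper's version is terser but identical in substance.
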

\begin{proof}
Simply note that if $Q'\subset Q$ with $Q'\in\cD_{m'}$ and $Q\in\cD_m$, then $\mu^{(m)}(Q')=2^{d(m-m')}\mu(Q)$, so that
\[
\sum_{Q'\in\cD_{m'}} \mu(Q')^2 = 2^{(s-d)(m'-m)} \left( 2^{sm} \sum_{Q\in\cD_m} \mu(Q)^2\right).
\]
In light of Lemma \ref{lem:energy-to-correlation}, adding up over $m'\ge m$ finishes the proof.
\end{proof}

As a direct consequence of Lemma  \ref{lem:energy-of-discretization}, we have the following multi-scale decomposition of energy.
\begin{lemma} \label{lem:multiscale-energy}
Let $\mu\in\cP([0,1)^d)$. Then for every sequence $0=m_0<m_1<\ldots<m_k$,
\[
\cE_s(\mu^{(m_k)}) \approx_{d,s} \sum_{j=0}^{k-1} 2^{s m_j} \sum_{Q\in\cD_{m_j}} \mu(Q)^2 \cE_s(\mu^{Q,(m_{j+1}-m_j)})
\]
\end{lemma}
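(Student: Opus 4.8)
The plan is to reduce Lemma~\ref{lem:multiscale-energy} to Lemma~\ref{lem:energy-of-discretization} by splitting the single sum $\sum_{j=1}^{m_k} 2^{sj}\sum_{Q\in\cD_j}\mu(Q)^2$ along the blocks $[m_j, m_{j+1})$ determined by the chosen scales. First I would invoke Lemma~\ref{lem:energy-of-discretization} with $m=m_k$ to write $\cE_s(\mu^{(m_k)}) \approx_{d,s} \sum_{\ell=1}^{m_k} 2^{s\ell}\sum_{R\in\cD_\ell}\mu(R)^2$, and then regroup the index $\ell$ according to which interval $[m_j,m_{j+1})$ it lies in, for $j=0,\ldots,k-1$. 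So it suffices to show that for each fixed $j$,
\[
\sum_{\ell=m_j}^{m_{j+1}-1} 2^{s\ell}\sum_{R\in\cD_\ell}\mu(R)^2 \;\approx_{d,s}\; 2^{sm_j}\sum_{Q\in\cD_{m_j}}\mu(Q)^2\,\cE_s\big(\mu^{Q,(m_{j+1}-m_j)}\big),
\]
after which summing over $j$ and absorbing the (uniformly bounded) implied constants completes the proof. There is a minor bookkeeping issue at the top scale $\ell = m_k$: depending on whether one wants the sum on the left to run up to $m_k$ or $m_k - 1$; since a single term $2^{sm_k}\sum_{R\in\cD_{m_k}}\mu(R)^2$ is comparable to the $\ell=m_k-1$ term up to a factor depending only on $s,d$, this does not affect the claimed comparability, and one can equally well apply Lemma~\ref{lem:energy-of-discretization} with $m=m_k$ and read off the sum over $\ell\le m_k$.

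The heart of the matter is the per-block identity, which is essentially just the definition of the renormalized measures $\mu^{Q}$ combined with Lemma~\ref{lem:energy-of-discretization} applied on $[0,1)^d$. Fix $j$ and a cube $Q\in\cD_{m_j}$. For $\ell$ in the range $m_j\le \ell < m_{j+1}$, the dyadic cubes $R\in\cD_\ell$ contained in $Q$ are in bijection, via the renormalizing homothety $T_Q$, with cubes $R'\in\cD_{\ell-m_j}$ in $[0,1)^d$, and $\mu_Q(R) = \mu(R)/\mu(Q)$ while $\mu^Q(R') = \mu(R)/\mu(Q)$. Hence
\[
\sum_{\substack{R\in\cD_\ell\\ R\subset Q}}\mu(R)^2 \;=\; \mu(Q)^2 \sum_{R'\in\cD_{\ell-m_j}}\mu^Q(R')^2.
\]
Multiplying by $2^{s\ell} = 2^{sm_j}\,2^{s(\ell-m_j)}$, summing over $\ell$ from $m_j$ to $m_{j+1}-1$ (equivalently, over $\ell - m_j$ from $0$ to $m_{j+1}-m_j-1$), and then summing over $Q\in\cD_{m_j}$, gives
\[
\sum_{\ell=m_j}^{m_{j+1}-1} 2^{s\ell}\sum_{R\in\cD_\ell}\mu(R)^2
\;=\; 2^{sm_j}\sum_{Q\in\cD_{m_j}}\mu(Q)^2 \left( \sum_{i=0}^{m_{j+1}-m_j-1} 2^{si}\sum_{R'\in\cD_i}\mu^Q(R')^2\right).
\]
By Lemma~\ref{lem:energy-of-discretization} (applied with the measure $\mu^Q\in\cP([0,1)^d)$ and with $m = m_{j+1}-m_j$), the inner parenthesised sum is $\approx_{d,s} \cE_s(\mu^{Q,(m_{j+1}-m_j)})$, which is exactly the per-block comparability claimed above. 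Summing over $j=0,\ldots,k-1$ and one more application of Lemma~\ref{lem:energy-of-discretization} at scale $m_k$ yields the statement.

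I do not anticipate a genuine obstacle here: the only things to be careful about are the off-by-one at the endpoints (the $i=0$ term contributes $\mu^Q([0,1)^d)^2 = 1$, matching the ``$1+$'' in the energy formula, and the treatment of $\ell=m_k$ as discussed), and the fact that the implied constants from Lemma~\ref{lem:energy-of-discretization} depend only on $d$ and $s$ and hence can be pulled out of the sum over $j$ uniformly. The lemma is really a formal consequence of the additivity of the dyadic $\ell^2$-sums across scales together with the scaling behaviour of $\mu^Q$, so the ``proof'' amounts to writing down these two observations cleanly.
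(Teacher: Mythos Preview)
Your proposal is correct and follows essentially the same approach as the paper: both reduce the claim to Lemma~\ref{lem:energy-of-discretization} via the scaling identity $\sum_{R\subset Q,\,R\in\cD_\ell}\mu(R)^2=\mu(Q)^2\sum_{R'\in\cD_{\ell-m_j}}\mu^Q(R')^2$, then sum over blocks and over $j$. The only cosmetic difference is that the paper indexes each block as $\ell\in\{m_j+1,\ldots,m_{j+1}\}$ rather than your $\ell\in\{m_j,\ldots,m_{j+1}-1\}$, which makes the union exactly $\{1,\ldots,m_k\}$ and so sidesteps the endpoint bookkeeping you (correctly) flag and handle.
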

\begin{proof}
It follows from Lemma \ref{lem:energy-of-discretization} and the definitions that
\[
 \sum_{Q\in\cD_{m_j}}   \mu(Q)^2 \cE_s(\mu^{Q,(m_{j+1}-m_j)}) \approx_{d,s} \sum_{k=m_j+1}^{m_{j+1}} 2^{s(k-m_j)}\sum_{Q'\in \cD_k} \mu(Q')^2.
\]
Now just add over $j$.
\end{proof}

Note that this continues to hold if $m_j$ is non-decreasing and $m_{j+1}=m_j$ for $\lesssim 1$ values of $j$.

While for us it is more convenient to work with the dyadic version of energy, the relationship between the energy of a measure and that of its projections is classically stated for Euclidean energy, defined as
\[
\cE_s^*(\mu) =\iint |x-y|^{-s} \,d\mu(x)d\mu(y).
\]
Thankfully, the following result of Pemantle and Peres \cite{PemantlePeres95} asserts that both kinds of energy are comparable up to a constant depending only on the ambient dimension.
\begin{theorem}[{\cite[Theorem 3.1]{PemantlePeres95}}]  \label{thm:dyadic-to-euclidean-energy}
\[
\cE_s(\mu) \approx_d \cE_s^*(\mu).
\]
\end{theorem}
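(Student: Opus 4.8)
The plan is to prove the two estimates $\cE_s(\mu)\lesssim_d\cE_s^*(\mu)$ and $\cE_s^*(\mu)\lesssim_{d,s}\cE_s(\mu)$ separately (the second implied constant degenerates as $s\to 0$, which is irrelevant for us). The first is a pointwise comparison of the two integrands, while the second is the substantive direction and proceeds through the correlation sums of Lemma~\ref{lem:energy-to-correlation}.

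For the inequality $\cE_s(\mu)\lesssim_d\cE_s^*(\mu)$: writing $j=|x\wedge y|$, the points $x$ and $y$ lie by definition in a common cube of $\cD_j$, which has diameter $\sqrt{d}\,2^{-j}$, so $|x-y|\le\sqrt{d}\,2^{-|x\wedge y|}$ and hence $2^{s|x\wedge y|}\le d^{s/2}|x-y|^{-s}\le d^{d/2}|x-y|^{-s}$ pointwise (using $s<d$). Integrating this against $d\mu(x)\,d\mu(y)$ gives the bound.

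For the reverse inequality, the first step is to discretize the Euclidean kernel: a geometric series gives $|x-y|^{-s}\lesssim_s\sum_{n\in\Z}2^{ns}\mathbf{1}[|x-y|\le 2^{-n}]$, whence
\[
\cE_s^*(\mu)\lesssim_s\sum_{n\in\Z}2^{ns}\,(\mu\times\mu)\{(x,y):|x-y|\le 2^{-n}\}.
\]
The terms with $n\le 0$ contribute $\lesssim_s 1$, since each $(\mu\times\mu)$-factor is $\le 1$ and $\sum_{n\le 0}2^{ns}<\infty$. The only step that is not purely formal handles $n\ge 1$: if $|x-y|\le 2^{-n}$ and $x$ lies in $Q\in\cD_{n-1}$, then (as $Q$ has sidelength $2\cdot 2^{-n}$) the point $y$ lies in the union $N(Q)$ of the at most $3^d$ cubes of $\cD_{n-1}$ equal or adjacent to $Q$; hence $(\mu\times\mu)\{|x-y|\le 2^{-n}\}\le\sum_{Q\in\cD_{n-1}}\mu(Q)\mu(N(Q))$, and after symmetrizing and using $2ab\le a^2+b^2$ together with the bound $3^d$ on the number of neighbours this is $\le 3^d\sum_{Q\in\cD_{n-1}}\mu(Q)^2$. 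Substituting, re-indexing by $m=n-1$, and applying Lemma~\ref{lem:energy-to-correlation} shows the $n\ge 1$ part is $\lesssim_d\sum_{m\ge 0}2^{ms}\sum_{Q\in\cD_m}\mu(Q)^2\lesssim_{d,s}1+\cE_s(\mu)$. Since $\cE_s(\mu)\ge\iint 2^0\,d\mu\,d\mu=1$, combining the two ranges of $n$ yields $\cE_s^*(\mu)\lesssim_{d,s}\cE_s(\mu)$.

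I expect the delicate point to be precisely this reverse inequality: it is false pointwise, because two points lying on opposite sides of a dyadic hyperplane can be arbitrarily Euclidean-close while their dyadic distance stays bounded below, so the comparison must exploit integration against $\mu\times\mu$. The ``neighbouring cubes'' observation is the quantitative device that absorbs this discrepancy; it is cleaner than the alternative ``one-third shifted grids'' argument, which would additionally require comparing the energy with respect to a translated grid back to $\cE_s$.
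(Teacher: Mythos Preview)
The paper does not actually give a proof of this theorem: it is quoted verbatim from \cite[Theorem 3.1]{PemantlePeres95} and used as a black box, so there is no ``paper's own proof'' to compare against. Your argument is correct and is essentially the standard elementary proof. The easy direction is indeed pointwise, and for the substantive direction your dyadic discretization of $|x-y|^{-s}$ together with the $3^d$-neighbour covering trick and Lemma~\ref{lem:energy-to-correlation} is exactly the right mechanism; your remark that the reverse inequality cannot hold pointwise (because of dyadic boundaries) and therefore must go through the correlation sums is spot on. One small comment: as you note, your constant in the reverse direction depends on $s$ (through the geometric series and through Lemma~\ref{lem:energy-to-correlation}), so strictly you prove $\approx_{d,s}$ rather than the stated $\approx_d$; this is harmless for every application in the paper, where $s$ always lies in a fixed compact subinterval of $(0,d)$.
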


Recall that the $(2,\gamma)$-Sobolev norm of a probability measure is defined as
\[
\|\nu\|_{2,\gamma}^2 = \int |\xi|^{2\gamma} |\wh{\nu}(\xi)|^2 d\xi,
\]
where $\wh{\nu}(\xi) = \int e^{2\pi i  \xi\cdot x}d\nu(x)$ is the Fourier transform of $\nu$.  We will often implicitly use that $\|\nu\|_2^2 \lesssim \|\nu\|_{2,\gamma}^2$ for positive $\gamma$. The following version of Marstrand's projection theorem will be one of the key tools in the proof of Theorem \ref{thm:distance-set}. Given $\mu\in\cP(\R^2)$ and $\theta\in S^1$, we denote by $\mu_\theta$ the push-down of $\mu$ under the orthogonal projection $\Pi_\theta(x)= \theta \cdot x$.
\begin{theorem} \label{thm:integral-sobolev-energy}
Let $\mu$ be a probability measure on $\R^2$ and let $\gamma\in (-1/2,1/2)$. Then
\[
\int_{S^1} \|\mu_\theta\|_{2,\gamma}^2  \,d \sigma(\theta) \approx  \cE_{1+2\gamma}(\mu),
\]
where $\sigma$ is normalized Lebesgue measure on the circle.
\end{theorem}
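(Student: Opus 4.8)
The plan is to unwind both sides through the Fourier transform and reduce everything to the classical potential-theoretic formula for the Riesz energy in the plane. First I would record the projection–slice identity: for $\theta\in S^1$ and $\xi\in\R$,
\[
\wh{\mu_\theta}(\xi) = \int e^{2\pi i\xi t}\,d\mu_\theta(t) = \int e^{2\pi i\xi\,(\theta\cdot x)}\,d\mu(x) = \wh{\mu}(\xi\theta),
\]
where on the right $\wh\mu$ denotes the Fourier transform of $\mu$ on $\R^2$. Plugging this into the definition of the $(2,\gamma)$-Sobolev norm and invoking Tonelli's theorem (all integrands that appear are non-negative, so there is no convergence issue to worry about and every identity below holds in $[0,\infty]$), the left-hand side of the theorem becomes $\int_{S^1}\int_\R |\xi|^{2\gamma}\,|\wh\mu(\xi\theta)|^2\,d\xi\,d\sigma(\theta)$.

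The second step is a passage to polar coordinates in $\R^2$. Since $\mu$ is a real measure we have $|\wh\mu(-\eta)|=|\wh\mu(\eta)|$, so the parts of the $\xi$-integral over $(0,\infty)$ and over $(-\infty,0)$ contribute equally, and the previous expression equals $2\int_{S^1}\int_0^\infty r^{2\gamma}\,|\wh\mu(r\theta)|^2\,dr\,d\sigma(\theta)$. Writing $r^{2\gamma}=r^{(1+2\gamma)-2}\cdot r$ and recalling that, for $\sigma$ the normalized length measure on $S^1$, one has $\int_{\R^2}f = 2\pi\int_0^\infty\int_{S^1}f(r\theta)\,r\,d\sigma(\theta)\,dr$, this turns into $\tfrac1\pi\int_{\R^2}|\eta|^{(1+2\gamma)-2}\,|\wh\mu(\eta)|^2\,d\eta$.

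Finally I would set $s=1+2\gamma$ and observe that the hypothesis $\gamma\in(-1/2,1/2)$ is exactly the condition $0<s<2$. This is the range in which the classical identity
\[
\int_{\R^2}|\eta|^{s-2}\,|\wh\mu(\eta)|^2\,d\eta \approx_s \cE_s^*(\mu)
\]
is available (see, e.g., \cite[Chapter 4]{Falconer14}). Combining the displays gives $\int_{S^1}\|\mu_\theta\|_{2,\gamma}^2\,d\sigma(\theta)\approx_\gamma \cE_{1+2\gamma}^*(\mu)$, and Theorem~\ref{thm:dyadic-to-euclidean-energy} then replaces the Euclidean energy $\cE_{1+2\gamma}^*$ by the dyadic energy $\cE_{1+2\gamma}$ at the cost of an absolute constant, which is the assertion of the theorem.

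I do not expect a genuine analytic obstacle here: the proof is a change of variables plus a citation. The one conceptual point worth flagging is that the argument must be carried out in the ambient plane and not on the line: only in dimension $d=2$ does the exponent $s=1+2\gamma$ sweep out the full admissible interval $(0,d)$ as $\gamma$ ranges over $(-1/2,1/2)$, so that the Riesz-energy formula applies throughout that range (on the line the analogous identity would only cover $\gamma<0$). The only bookkeeping subtlety is tracking constants — the implicit constant depends on $\gamma$ through the constant in the Riesz formula, which degenerates as $\gamma\to\pm1/2$ but is harmless for fixed $\gamma$, and uniform for $\gamma$ in any compact subinterval of $(-1/2,1/2)$.
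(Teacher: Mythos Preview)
Your argument is correct and is essentially the same as the paper's: the paper simply cites \cite[Theorem 4.5]{Mattila04} for the identity with the Euclidean energy $\cE^*_{1+2\gamma}$ and then invokes Theorem~\ref{thm:dyadic-to-euclidean-energy}, whereas you unpack that citation by carrying out the standard projection--slice and polar-coordinates computation explicitly. Your bookkeeping of constants and of the range of $\gamma$ is accurate.
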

\begin{proof}
See \cite[Theorem 4.5]{Mattila04} for the proof with $\cE^*$ in place of $\cE$. Thanks to Theorem \ref{thm:dyadic-to-euclidean-energy}, the statement also holds for $\cE$.
\end{proof}

While on $\R^2$ we will find it more convenient to work with energies, in order to  deduce that the pinned distance sets are large we will estimate their sizes via entropy. We begin by recalling some definitions related to the latter. We denote Shannon entropy of the probability measure $\mu$ with respect to a finite measurable partition $\cF$ (of $\supp(\mu)$) by $H(\mu,\cF)$, and the conditional entropy with respect to the finite measurable partition $\mathcal{G}$ by $H(\mu,\cF|\mathcal{G})$. These are defined as
\begin{align*}
H(\mu,\cF) &= \sum_{F\in\cF} -\mu(F)\log\mu(F),\\
H(\mu,\cF|\mathcal{G}) &= \sum_{G\in\mathcal{G}:\mu(G)>0} \mu(G) H(\mu_G,\cF).
\end{align*}
We follow the usual convention $0\cdot \log(0)=0$. Further, if $\mu\in\cP(\R^d)$ has bounded support we denote by $H_k(\mu)$ the normalized entropy $\tfrac{H(\mu,\cD_k)}{k}$, and note that if $\mu\in\mathcal{P}([0,1)^d)$, then $0\le H_k(\mu)\le d$. This is a particular case of the general fact that
\begin{equation} \label{eq:upper-bound-entropy-size-partition}
H(\mu,\cF) \le \log|\cF|.
\end{equation}

We record the following immediate consequence of the concavity of the logarithm.
\begin{lemma} \label{lem:L2-to-entropy}
If $\mu\in\cP(\R^d)$, then
\[
H_m(\mu) \ge  d- \frac{1}{m}\log\|\mu^{(m)}\|_2^2.
\]
\end{lemma}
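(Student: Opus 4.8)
The plan is to compare $H_m(\mu)$ with the entropy of $\mu^{(m)}$ relative to Lebesgue measure on $[0,1)^d$, and then apply Jensen's inequality (equivalently, concavity of $\log$). First I would observe that $H(\mu,\cD_m) = H(\mu^{(m)},\cD_m)$, since $\mu$ and $\mu^{(m)}$ assign the same mass to every cube of $\cD_m$. Write $p_Q = \mu(Q) = \mu^{(m)}(Q)$ for $Q\in\cD_m$; on each such $Q$ the density of $\mu^{(m)}$ is the constant $2^{dm}p_Q$. Then
\[
\|\mu^{(m)}\|_2^2 = \sum_{Q\in\cD_m} \int_Q (2^{dm}p_Q)^2\,d\cL = \sum_{Q\in\cD_m} 2^{-dm}(2^{dm}p_Q)^2 = 2^{dm}\sum_{Q\in\cD_m} p_Q^2 .
\]

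Next I would apply concavity of the logarithm (Jensen with respect to the probability weights $p_Q$): since $\sum_Q p_Q = 1$,
\[
\log\!\Big(\sum_{Q\in\cD_m} p_Q \cdot p_Q\Big) \le \sum_{Q\in\cD_m} p_Q \log p_Q = -H(\mu,\cD_m).
\]
Combining the two displays gives $\log\|\mu^{(m)}\|_2^2 \le dm\log 2 - H(\mu,\cD_m)$, hence $\log\|\mu^{(m)}\|_2^2 \le dm - H(\mu,\cD_m)$ once logarithms are taken to base $2$ (or, keeping natural logarithms, $dm\log 2 \le dm$ absorbs the constant in the direction we want). Dividing by $m$ and rearranging yields $H_m(\mu) = \tfrac{1}{m}H(\mu,\cD_m) \ge d - \tfrac{1}{m}\log\|\mu^{(m)}\|_2^2$, as claimed. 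One should fix the convention that $\log$ is base $2$ throughout (consistent with the appearance of $2^{dm}$ and with the normalization $0\le H_k(\mu)\le d$ noted just above), so that the term $2^{dm}$ contributes exactly $dm$.

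The computation is entirely routine; there is no real obstacle. The only point requiring a little care is the bookkeeping of the logarithm base and the factor $2^{dm}$ coming from the density of $\mu^{(m)}$ on dyadic cubes of generation $m$ — getting this right is what makes the constant in front of $d$ come out to exactly $1$ rather than $\log 2$.
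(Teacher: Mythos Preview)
Your approach is exactly the paper's: compute $\|\mu^{(m)}\|_2^2 = 2^{dm}\sum_{Q\in\cD_m}\mu(Q)^2$ and then apply concavity of $\log$ to compare $\sum_Q p_Q\log p_Q$ with $\log\sum_Q p_Q^2$. However, your Jensen inequality is written with the wrong sign. Concavity of $\log$ gives
\[
\log\Big(\sum_{Q} p_Q\cdot p_Q\Big)\;\ge\;\sum_{Q} p_Q\log p_Q \;=\; -H(\mu,\cD_m),
\]
not $\le$. With the correct direction you get $\log\|\mu^{(m)}\|_2^2 \ge dm - H(\mu,\cD_m)$ (base $2$), which then rearranges to $H_m(\mu)\ge d-\tfrac{1}{m}\log\|\mu^{(m)}\|_2^2$. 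As written, your displayed Jensen step and the subsequent ``combining'' step both point the wrong way, and your final ``rearranging'' silently flips the inequality back; two sign errors cancel to give the right conclusion, but the argument on the page is not correct. (Your parenthetical about natural logarithms, ``$dm\log 2\le dm$ absorbs the constant in the direction we want,'' is also on the wrong side once the inequality is fixed; just commit to base $2$, as the normalization $H_k(\mu)\le d$ forces.) Fix the direction of Jensen and the proof is identical to the paper's.
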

\begin{proof}
First, note that
\[
\|\mu^{(m)}\|_2^2 = 2^{dm} \sum_{Q\in\cD_m} \mu(Q)^2.
\]
Since
\[
-H(\mu,\cD_m) = \sum_{Q\in\cD_m} \mu(Q)\log\mu(Q) \le \log\left(\sum_{Q\in\cD_m} \mu(Q)^2\right)
\]
by the concavity of the logarithm, the claim follows.
\end{proof}
The previous lemma will be used in conjunction with the following one, asserting that if one first projects a measure and then discretizes, the $L^2$ norm is roughly the same as if one first discretizes, and then projects.
\begin{lemma} \label{lem:discretize-project}
\[
\|(\mu_\theta)^{(m)}\|_2^2 \approx \|\Pi_\theta(\mu^{(m)})\|_2^2.
\]
\end{lemma}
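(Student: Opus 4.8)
The plan is to unwind both sides into sums over dyadic cubes of the line and compare term by term, where the only subtlety is that the orthogonal projection $\Pi_\theta$ does not map dyadic squares of $\R^2$ exactly onto dyadic intervals of $\R$, but only into $O(1)$ of them. First I would recall from the computation in the proof of Lemma~\ref{lem:L2-to-entropy} (specialized to $d=1$) that for any $\nu\in\cP(\R)$ one has $\|\nu^{(m)}\|_2^2 = 2^m \sum_{I\in\cD_m^{(1)}} \nu(I)^2$. Applying this with $\nu=\mu_\theta$ gives $\|(\mu_\theta)^{(m)}\|_2^2 = 2^m \sum_{I\in\cD_m^{(1)}} \mu_\theta(I)^2$. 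For the right-hand side, since $\mu^{(m)}$ is absolutely continuous with density constant $= 2^{dm}\mu(Q)$ on each $Q\in\cD_m^{(2)}$, the measure $\Pi_\theta(\mu^{(m)})$ is absolutely continuous; I would estimate its $L^2$ norm by covering $\R$ with the intervals $I\in\cD_m^{(1)}$ and writing $\|\Pi_\theta(\mu^{(m)})\|_2^2 \approx 2^m \sum_{I} \Pi_\theta(\mu^{(m)})(I)^2 = 2^m \sum_I \mu^{(m)}(\Pi_\theta^{-1} I)^2$, using that the density of an $L^2$ function is comparable in $L^2$ norm to its averages over a dyadic partition of the same scale (a standard fact, essentially Lemma~\ref{lem:energy-of-discretization} applied at consecutive scales, or just Jensen in one direction and a bounded-overlap argument in the other).

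The heart of the matter is then to compare $\mu_\theta(I) = \mu(\Pi_\theta^{-1} I)$ with $\mu^{(m)}(\Pi_\theta^{-1} I)$ for $I\in\cD_m^{(1)}$. The key geometric observation is that $\Pi_\theta^{-1} I$ is an infinite strip of width $2^{-m}$, and a dyadic square $Q\in\cD_m^{(2)}$ either is disjoint from this strip, or is entirely contained in the slightly fattened strip $\Pi_\theta^{-1} I'$, where $I'$ is the union of $I$ with its two neighbors in $\cD_m^{(1)}$; conversely each $Q$ meeting the strip lies in $\Pi_\theta^{-1} I'$ for $O(1)$ choices of $I$. Since $\mu^{(m)}$ redistributes the mass $\mu(Q)$ uniformly over $Q$, for each such $Q$ we have $\mu^{(m)}(Q\cap \Pi_\theta^{-1} I) \le \mu(Q) = \mu^{(m)}(Q) \le \mu(Q\cap\Pi_\theta^{-1}I')$ when $Q\subset \Pi_\theta^{-1}I'$, and summing over $Q$ meeting the strip gives $\mu^{(m)}(\Pi_\theta^{-1}I) \le \sum_{Q\cap\Pi_\theta^{-1}I\neq\emptyset}\mu(Q) \le \mu_\theta(I'')$ for a further $O(1)$-fattening $I''$, and in the other direction $\mu_\theta(I) = \mu(\Pi_\theta^{-1}I) \le \sum_{Q\cap \Pi_\theta^{-1}I\neq\emptyset}\mu(Q) = \mu^{(m)}\big(\bigcup_{Q\cap\Pi_\theta^{-1}I\neq\emptyset} Q\big)$, which is $\le \mu^{(m)}(\Pi_\theta^{-1}\tilde I)$ for yet another bounded fattening $\tilde I$. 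Thus $\mu_\theta(I)$ and $\mu^{(m)}(\Pi_\theta^{-1}I)$ are each bounded by the other evaluated on an $O(1)$-neighborhood of $I$.

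Finally I would convert these neighborhood comparisons into the desired two-sided $L^2$ bound: for any nonnegative sequences related by $a_I \le \sum_{I'\sim I} b_{I'}$ and $b_I \le \sum_{I'\sim I} a_{I'}$ with each index having $O(1)$ neighbors, one has $\sum_I a_I^2 \approx \sum_I b_I^2$ by Cauchy-Schwarz, $(\sum_{I'\sim I} b_{I'})^2 \lesssim \sum_{I'\sim I} b_{I'}^2$, and reindexing. Multiplying through by $2^m$ then gives $\|(\mu_\theta)^{(m)}\|_2^2 \approx \|\Pi_\theta(\mu^{(m)})\|_2^2$, with the implied constants absolute (independent of $\theta$ and $m$). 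I expect the main obstacle to be purely bookkeeping: keeping track of the successive $O(1)$-fattenings of the intervals $I$ and making sure the bounded-overlap claims for the strips $\Pi_\theta^{-1}I$ are stated cleanly; there is no analytic difficulty once the geometry of squares versus strips is set up correctly.
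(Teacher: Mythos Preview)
Your proposal is correct and follows essentially the same route as the paper: both arguments rest on the geometric observation that a dyadic square $Q\in\cD_m^{(2)}$ with $\Pi_\theta(Q)\cap I\neq\varnothing$ satisfies $\Pi_\theta(Q)\subset 5I$, together with a Cauchy--Schwarz/bounded-overlap conversion of the resulting neighborhood inequalities into two-sided $L^2$ bounds. The one point to tighten is your claim that $\|\Pi_\theta(\mu^{(m)})\|_2^2 \approx 2^m\sum_I \Pi_\theta(\mu^{(m)})(I)^2$ is a ``standard fact'' about $L^2$ densities: only the Jensen direction is general, and the reverse inequality here already requires the pointwise bound $g(x)\lesssim 2^m\,\Pi_\theta(\mu^{(m)})(5I)$ for $x\in I$, i.e.\ precisely the geometric input you invoke later. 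The paper avoids this redundancy by working directly with the densities $f,g$ and proving $\|f\|_2\lesssim\|g\|_2$ and $\|g\|_2\lesssim\|f\|_2$ in one pass, but the content is the same.
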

\begin{proof}
The densities of  $(\mu_\theta)^{(m)}$,  $\Pi_\theta(\mu^{(m)})$ will be denoted  by $f,g$ respectively.

We first show that $\|f\|_2^2 \lesssim \|g\|_2^2$. Let $I\in\cD_m^{(1)}$, and denote the interval with the same center as $I$ and five times the length by $5I$. Since every $Q\in\cD_m^{(2)}$ such that $\Pi_\theta(Q)\cap I\neq\varnothing$ satisfies $\Pi_\theta(Q)\subset 5I$, we have that.
\[
(\mu_\theta)^{(m)}(I) \le \Pi_\theta(\mu^{(m)})(5I).
\]
Therefore
\begin{align*}
\int_I f^2 &= 2^m \left((\mu_\theta)^{(m)}(I)\right)^2 \le 2^m \left(\Pi_\theta(\mu^{(m)})(5I)\right)^2 \lesssim \int_{5I} g^2,
\end{align*}
using Cauchy-Schwartz for the last inequality. Adding over all $I\in\cD_m^{(1)}$ yields the claim.

For the opposite inequality, note that if $x\in I\in\cD_m$ then (again using that if $\Pi_\theta(Q)\cap I\neq\varnothing$, then $\Pi_\theta(Q)\subset 5I$)
\[
g(x) = 2^{2m} \sum_{Q\in\cD_m} \mu(Q) \mathcal{H}^1(Q\cap \Pi_\theta^{-1}(x)) \lesssim 2^m \mu_\theta(5I) = 2^m (\mu_\theta)^{(m)}(5I).
\]
Hence, using $(\sum_{i=1}^5 a_i)^2 \lesssim \sum_{i=1}^5 a_i^2$,
\[
\int_I g^2 \lesssim 2^m \sum_{J\in\cN(I)} \mu_\theta(J)^2,
\]
where $\cN(I)$ are the five dyadic intervals making up $5 I$. Adding up over $I\in\cD_m$ yields the claim.
\end{proof}

The next proposition is the key device that will allow us to bound from below the (normalized) entropy of pinned distance measures in terms of a multi-scale formula involving localized entropies. A local variant  of this goes back to \cite{HochmanShmerkin12}, while the relationship between local and global entropy is explored in \cite{Hochman14}. The particular version below is a small adaptation of results from \cite{Orponen17}. We note that, although the proof follows, with minor changes, by combining those of \cite[Lemma 3.5, Remark 3.6 and Lemma 3.12]{Orponen17}, there is a conceptual difference with all the cited works, already remarked in the introduction: in all of them, the sequence $m_j$ forms an arithmetic progression (and this was essential for the methods in those papers), while for us it will be crucial that $m_{j+1}-m_j\to\infty$ at a sufficiently fast rate.
\begin{prop} \label{prop:entropy-of-pinned-dist-measures}
Let $\mu\in\cP([0,1)^d)$, and let $y\in \R^d\setminus \supp(\mu)$. Let $0=m_0\le m_1\le \ldots \le m_k$, and write $d_j=m_{j+1}-m_j$. Suppose $d_j \le m_j+1$ for all $j$. Then
\[
H(\Delta_y\mu,\cD_{m_k}) \ge -C k +\sum_{j=0}^{k-1} \sum_{Q\in\cD_{m_j}} \mu(Q) H\left(\mu^Q_{\theta(y,x_Q)} ,\cD_{d_j}\right),
\]
where $x_Q$ are arbitrary points in $Q$, and $C>0$ depends only on $\dist(y,\supp(\mu))$.
\end{prop}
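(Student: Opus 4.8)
The plan is to build the pinned distance measure $\Delta_y\mu$ gradually along the scales $m_0<m_1<\cdots<m_k$, using the chain rule for entropy to reduce a global entropy lower bound to a sum of contributions, one per scale step, and then to identify each contribution with the entropy of a localized orthogonal projection. First I would write $\Delta_y\mu = \phi_y\mu$ where $\phi_y(x)=|x-y|$, and observe that for $x$ in a cube $Q\in\cD_{m_j}$, the map $\phi_y$ differs from the linear map $\Pi_{\theta(y,x_Q)}$ (up to an additive constant and reflection, which do not affect entropy) by an error controlled by the curvature of $\phi_y$ on $Q$; since $Q$ has diameter $\approx 2^{-m_j}$ and $y$ is at distance $\gtrsim 1$ from $\supp\mu$, the second-order term is $O(2^{-2m_j})$, which is negligible at the resolution $2^{-m_{j+1}}$ precisely because $d_j=m_{j+1}-m_j\le m_j+1$, i.e. $2^{-m_{j+1}}\gtrsim 2^{-2m_j}$. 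This is the content of the hypothesis $d_j\le m_j+1$, and it is what lets the localized nonlinear projection be replaced, at the cost of a bounded loss per scale, by the localized linear projection $\Pi_{\theta(y,x_Q)}$ applied to $\mu^Q$.

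The core computation is the entropy telescoping. Using the chain rule $H(\nu,\cD_{m_k}) = \sum_{j=0}^{k-1} H(\nu,\cD_{m_{j+1}}\mid \cD_{m_j}) + H(\nu,\cD_{m_0})$ for $\nu=\Delta_y\mu$, I would bound each conditional term $H(\Delta_y\mu,\cD_{m_{j+1}}\mid\cD_{m_j})$ from below. Conditioning on a dyadic interval $I\in\cD_{m_j}$ in the target line, the portion of $\Delta_y\mu$ landing in $I$ comes (up to the curvature error above, and up to the distortion of passing between the dyadic partition on the line and the level sets of $\phi_y$, each a bounded multiplicative/additive nuisance absorbed into the constant $Ck$) from $\mu$ restricted to an annular region, which is a union of boundedly many cubes $Q\in\cD_{m_j}$; on each such $Q$ the pushforward is comparable to $\Pi_{\theta(y,x_Q)}(\mu_Q)$ rescaled, whose entropy at scale $2^{-d_j}$ is $H(\mu^Q_{\theta(y,x_Q)},\cD_{d_j})$. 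Concavity of entropy under the averaging $\mu(G)H(\mu_G,\cdot)$, together with the fact that refining or coarsening a partition by a bounded factor changes entropy by $O(1)$, then yields $H(\Delta_y\mu,\cD_{m_{j+1}}\mid\cD_{m_j}) \ge -C + \sum_{Q\in\cD_{m_j}}\mu(Q)H(\mu^Q_{\theta(y,x_Q)},\cD_{d_j})$. Summing over $j=0,\ldots,k-1$ and discarding the nonnegative term $H(\Delta_y\mu,\cD_{m_0})$ gives the claim with the advertised $-Ck$.

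The main obstacle is the careful bookkeeping at each scale: one must simultaneously control (i) the curvature error of $\phi_y$ versus its linearization on a cube of the relevant size, (ii) the mismatch between dyadic intervals on the target line and the actual image sets (level sets of $\phi_y$ intersected with the relevant annulus), and (iii) the fact that several cubes $Q\in\cD_{m_j}$ can project into the same interval $I$. Each of these is individually routine—(i) is a Taylor estimate using $\dist(y,\supp\mu)\gtrsim 1$, (ii) and (iii) are covering arguments producing only bounded overlap—but combining them cleanly so that the total loss is exactly $O(1)$ per scale, hence $O(k)$ in total, and verifying that the localization $\mu\mapsto\mu^Q$ commutes appropriately with all the rescalings, is where the real work lies. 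I expect to follow the bookkeeping of \cite[Lemmas 3.5, 3.12 and Remark 3.6]{Orponen17} closely, the only genuine change being that I do not assume the $m_j$ are an arithmetic progression: the argument there is in fact local in $j$ and never uses the spacing, so the adaptation is essentially a matter of checking that no estimate secretly depended on $d_j$ being constant, with the one substantive requirement $d_j\le m_j+1$ surfacing exactly in step (i) above.
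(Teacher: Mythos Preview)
Your overall strategy---chain rule/telescoping, concavity, linearization on each cube using the hypothesis $d_j\le m_j+1$, rescaling---is exactly what the paper does, and your identification of where the hypothesis $d_j\le m_j+1$ enters (the Taylor/curvature error $O(2^{-2m_j})$ being $\lesssim 2^{-m_{j+1}}$) is correct.

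There is one genuine misstep in your sketch. You write that the preimage $\Delta_y^{-1}(I)$ of an interval $I\in\cD_{m_j}$ is an annular region which ``is a union of boundedly many cubes $Q\in\cD_{m_j}$''. This is false: an annulus of width $2^{-m_j}$ and radius $\approx 1$ meets on the order of $2^{m_j}$ dyadic cubes of side $2^{-m_j}$, not $O(1)$. Your item (iii), ``several cubes project into the same interval \ldots producing only bounded overlap'', has the same problem. If you try to condition on the \emph{target} interval first and then pull back, you will not get a bounded covering and the argument will not close.

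The paper avoids this entirely by decomposing in the \emph{domain} first. One writes $\Delta_y\mu=\sum_{Q\in\cD_{m_j}}\mu(Q)\,\Delta_y(\mu_Q)$ and applies concavity of conditional entropy (your ``averaging $\mu(G)H(\mu_G,\cdot)$'') directly to this convex combination, obtaining
\[
H(\Delta_y\mu,\cD_{m_{j+1}}\mid\cD_{m_j}) \ \ge\ \sum_{Q\in\cD_{m_j}}\mu(Q)\,H(\Delta_y(\mu_Q),\cD_{m_{j+1}}\mid\cD_{m_j}).
\]
No counting of cubes in a target preimage is needed. Now each $\Delta_y(\mu_Q)$ is supported on a set of diameter $\lesssim 2^{-m_j}$, so $H(\Delta_y(\mu_Q),\cD_{m_j})=O(1)$ and the conditional entropy equals $H(\Delta_y(\mu_Q),\cD_{m_{j+1}})$ up to $O(1)$; then your linearization step applies cube-by-cube (this is the paper's Lemma~\ref{lem:linearization}), and finally one uses that $\Pi_\theta$ commutes with homotheties to pass to $H(\mu^Q_{\theta(y,x_Q)},\cD_{d_j})$. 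With this correction your proof is the paper's proof.
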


In the proof we will require some further elementary properties of entropy:
\begin{enumerate}[(A)]
\item \label{enum:equivalent-partitions}  If $\cF, \cG$ have the property that each element of $\cF$ hits at most $N$ elements of $\cG$ and vice-versa, then
\[
|H(\mu,\cF)-H(\mu,\cG)| \le \log N.
\]
\item \label{enum:refining-partitions} If $\cG$ refines $\cF$ (that is, each element of $\cF$ is a union of elements in $\cG$), then
\[
H(\mu,\cF|\cG) = H(\mu,\cG)-H(\mu,\cF).
\]
\item \label{enum:concavity-of-entropy} Conditional entropy is concave as a function of the measure: for $t\in [0,1]$,
\[
H(t\mu+(1-t)\nu,\cF|\cG) \ge t H(\mu,\cF|\cG)+(1-t)H(\nu,\cF|\cG).
\]
\end{enumerate}

The proof the proposition depends on a linearization argument, which we present first. It is very similar to \cite[Lemma 3.12]{Orponen17}.
\begin{lemma} \label{lem:linearization}
Under the assumptions of Proposition \ref{prop:entropy-of-pinned-dist-measures}, if $Q\in\cD_{m_j}$ has positive $\mu$-measure, then
\begin{equation} \label{eq:linear-to-nonlinear-entropy}
\left| H\big(\Delta_y(\mu_Q) ,\cD_{m_{j+1}}|\cD_{m_j}\big) - H\big(\Pi_{\theta(y,x_Q)}(\mu_Q),\cD_{m_{j+1}}|\cD_{m_j}\big) \right|\lesssim 1 ,
\end{equation}
with the implicit constant depending on $\dist(y,\supp(\mu))$ only.
\end{lemma}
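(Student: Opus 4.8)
The plan is to exploit that, on the small cube $Q\in\cD_{m_j}$, the nonlinear map $\Delta_y\colon x\mapsto|x-y|$ agrees to second order with an affine map whose linear part is, up to an irrelevant sign, the projection $\Pi_\theta$ with $\theta=\theta(y,x_Q)$, and that the hypothesis $d_j\le m_j+1$ makes the resulting second-order error negligible at the resolution $2^{-m_{j+1}}$ at which entropy is measured. Write $\rho=\dist(y,\supp\mu)>0$. Since $\rho>0$ the map $\Delta_y$ is smooth near $Q$, and because $\nabla\Delta_y(x_Q)=(x_Q-y)/|x_Q-y|=-\theta$, its first-order Taylor polynomial at $x_Q$ is the affine map $L(x)=c-\Pi_\theta(x)$ for a suitable constant $c$. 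Hence $L\mu_Q$ is the image of $\Pi_\theta\mu_Q$ under $t\mapsto c-t$, a reflection composed with a translation; as a reflected or translated dyadic interval meets at most two dyadic intervals, property (A) gives $|H(L\mu_Q,\cD_{m_{j+1}})-H(\Pi_\theta\mu_Q,\cD_{m_{j+1}})|\le 2\log 2$, so it suffices to compare $\Delta_y\mu_Q$ with $L\mu_Q$.

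First I would remove the conditioning. As $\Delta_y$, $\Pi_\theta$ and $L$ are all $1$-Lipschitz and $Q$ is connected with diameter $\sqrt2\,2^{-m_j}$, the measures $\Delta_y\mu_Q$, $\Pi_\theta\mu_Q$ and $L\mu_Q$ are supported on intervals of length $\le\sqrt2\,2^{-m_j}$, each meeting at most three cells of $\cD_{m_j}$; thus $H(\Delta_y\mu_Q,\cD_{m_j})$ and $H(L\mu_Q,\cD_{m_j})$ are $\le\log3$, and since $\cD_{m_{j+1}}$ refines $\cD_{m_j}$, property (B) reduces the statement of the lemma to
\[
\left| H(\Delta_y\mu_Q,\cD_{m_{j+1}}) - H(L\mu_Q,\cD_{m_{j+1}}) \right| \lesssim_\rho 1 .
\]
The two inputs for this are a curvature estimate and the scale hypothesis. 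For the former, $\mu(Q)>0$ gives a point $z\in Q\cap\supp\mu$, so $|x-y|\ge\rho-\sqrt2\,2^{-m_j}\ge\rho/2$ for $x\in Q$ provided $2^{-m_j}\le\rho/(2\sqrt2)$; then $\|\mathrm{Hess}\,\Delta_y(x)\|\lesssim 1/|x-y|\le 2/\rho$ on $Q$, so Taylor's theorem gives $\sup_{x\in Q}|\Delta_y(x)-L(x)|\lesssim_\rho 2^{-2m_j}$. For the latter, $d_j\le m_j+1$ means $m_{j+1}\le 2m_j+1$, hence $2^{-2m_j}\le 2\cdot2^{-m_{j+1}}$, and combining with the curvature bound, $\sup_{x\in Q}|\Delta_y(x)-L(x)|\le C(\rho)\,2^{-m_{j+1}}$ — the linearization error is at most a $\rho$-dependent multiple of the resolution scale.

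To finish, write $H(\Delta_y\mu_Q,\cD_{m_{j+1}})=H(\mu_Q,\cF)$ and $H(L\mu_Q,\cD_{m_{j+1}})=H(\mu_Q,\cG)$, where $\cF$ and $\cG$ are the traces on $Q$ of $\Delta_y^{-1}\cD_{m_{j+1}}$ and $L^{-1}\cD_{m_{j+1}}$ respectively (the entropy of a push-forward equals the entropy against the pulled-back partition). If $x$ belongs to the cell of $\cF$ over $I\in\cD_{m_{j+1}}$, then $L(x)$ lies within distance $C(\rho)2^{-m_{j+1}}$ of $I$, hence in an interval of length $\le(1+2C(\rho))2^{-m_{j+1}}$ that is covered by a number of cells of $\cD_{m_{j+1}}$ bounded in terms of $\rho$; thus each cell of $\cF$ meets only $\lesssim_\rho 1$ cells of $\cG$, and the symmetric bound (using $|L-\Delta_y|\le C(\rho)2^{-m_{j+1}}$ again) controls the number of cells of $\cF$ met by a cell of $\cG$. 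Property (A) then yields $|H(\mu_Q,\cF)-H(\mu_Q,\cG)|\lesssim_\rho 1$. Finally, for the boundedly many (in terms of $\rho$) indices $j$ with $2^{-m_j}>\rho/(2\sqrt2)$ the estimate is trivial, since then $m_{j+1}\le 2m_j+1$ is bounded in terms of $\rho$, so $\Delta_y(Q)$ meets only $\lesssim_\rho 1$ cells of $\cD_{m_{j+1}}$ and both conditional entropies are $\lesssim_\rho 1$ by \eqref{eq:upper-bound-entropy-size-partition}.

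The only step with genuine content is the scale-matching: the second-order Taylor error of $\Delta_y$ over $Q$ has size $\sim 2^{-2m_j}$, and it becomes invisible to the entropy measured at resolution $2^{-m_{j+1}}$ exactly when $2^{-2m_j}\lesssim 2^{-m_{j+1}}$, that is, precisely under the hypothesis $d_j\le m_j+1$ imposed in Proposition \ref{prop:entropy-of-pinned-dist-measures}; if $d_j$ were allowed to grow faster, the linearization error would overwhelm the resolution and the two entropies could differ by an unbounded amount. Everything else is routine bookkeeping with the entropy properties (A) and (B), keeping track that every implicit constant depends only on $\rho=\dist(y,\supp\mu)$.
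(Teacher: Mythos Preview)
Your proof is correct and follows essentially the same approach as the paper: reduce to unconditioned entropies using that the images have diameter $\lesssim 2^{-m_j}$, then pull back $\cD_{m_{j+1}}$ through $\Delta_y$ and through the linear projection, and use the hypothesis $d_j\le m_j+1$ together with the curvature bound $\lesssim_\rho 2^{-2m_j}$ to see the two pulled-back partitions are comparable in the sense of property~(A). The only cosmetic difference is that you pass through the affine linearization $L$ (handling the sign and constant via a reflection/translation) and bound $|\Delta_y-L|$ pointwise via the Hessian, whereas the paper compares increments of $\Delta_y$ and $\Pi_{\theta(y,x_Q)}$ directly via the mean value theorem; these yield the same estimate.
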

\begin{proof}
Since $\Delta_y(Q)$ and $\Pi_{\theta(y,x_Q)}(Q)$ have diameter $\lesssim 2^{-m_j}$, it follows that if $f=\Delta_y$ or $\Pi_{\theta(y,x_Q)}$, then $H(f\mu_Q,\cD_{m_j})\lesssim 1$ and so, thanks to property \eqref{enum:refining-partitions} above, it is enough to prove that
\[
\left| H\big(\Delta_y(\mu_Q) ,\cD_{m_{j+1}}\big) - H\big(\Pi_{\theta(y,x_Q)}(\mu_Q),\cD_{m_{j+1}}\big) \right|\lesssim 1.
\]
This is equivalent to
\begin{equation} \label{eq:equivalent-partitions}
\left| H(\mu_Q ,\mathcal{F}) - H(\mu_Q,\mathcal{G}) \right|\lesssim 1,
\end{equation}
where
\begin{align*}
\mathcal{F} &= \{ \Delta_y^{-1}(I): I \in \cD_{m_{j+1}}, \Delta_y^{-1}(I)\cap Q\neq\varnothing\},\\
\mathcal{G} &= \{ \Pi_{\theta(y,x_Q)}^{-1}(I): I \in \cD_{m_{j+1}}, \Pi_{\theta(y,x_Q)}^{-1}(I)\cap Q\neq\varnothing\}.
\end{align*}

Now, note that if $z_1,z_2\in Q$ then, since $\nabla \Delta_y(z) = \theta(z,y)$, there is $z_3$ in the segment joining them such that
\begin{align}
|\Delta_y(z_1)-\Delta_y(z_2)| &= |\Pi_{\theta(y,z_3)}(z_1-z_2)| \nonumber \\
&\le |\Pi_{\theta(y,x_Q)}(z_1-z_2)| + \|\Pi_{\theta(y,z_3)}-\Pi_{\theta(y,x_Q)}\||z_1-z_2| \nonumber \\
&\lesssim |\Pi_{\theta(y,x_Q)}(z_1-z_2)| + 2^{-m_j} 2^{-m_j} \nonumber \\
&\le |\Pi_{\theta(y,x_Q)}(z_1)-\Pi_{\theta(y,x_Q)}(z_2)| + 2\cdot 2^{-m_{j+1}}. \label{eq:linearization}
\end{align}
In the third line we used that $|\theta(y,z_3)-\theta(y,x_Q)|\lesssim 2^{-m_j}$ (here the constant depends on the distance from $y$ to $\supp(\mu)$), while in the last line we used the hypothesis $d_j \le m_j+1$. This shows that each element of $\mathcal{G}$ intersects $\lesssim 1$ elements of $\mathcal{F}$. This also holds with the partitions interchanged, with the same argument, so property \eqref{enum:equivalent-partitions} above yields that \eqref{eq:equivalent-partitions} is verified and, with it, the lemma.
\end{proof}

\begin{proof}[Proof of Proposition \ref{prop:entropy-of-pinned-dist-measures}]
We estimate:
\begin{align*}
H(\Delta_y\mu,\cD_{m_k}) &= \sum_{j=0}^{k-1}  H(\Delta_y\mu,\cD_{m_{j+1}}|\cD_{m_j}) \\
&= \sum_{j=0}^{k-1} H\left(\sum_{Q\in\cD_{m_j}} \mu(Q) \Delta_y(\mu_Q) ,\cD_{m_{j+1}}|\cD_{m_j}\right)\\
&\ge \sum_{j=0}^{k-1} \sum_{Q\in\cD_{m_j}} \mu(Q) H(\Delta_y(\mu_Q) ,\cD_{m_{j+1}}|\cD_{m_j})\\
&\ge \sum_{j=0}^{k-1} \sum_{Q\in\cD_{m_j}} \mu(Q) \left(H((\mu_Q)_{\theta(y,x_Q)},\cD_{m_{j+1}}|\cD_{m_j}) - C\right)\\
&= \sum_{j=0}^{k-1} \sum_{Q\in\cD_{m_j}} \mu(Q) (H(\mu^Q_{\theta(y,x_Q)} ,\cD_{d_j}|\cD_0) -C) \\
&= \sum_{j=0}^{k-1} \sum_{Q\in\cD_{m_j}} \mu(Q) (H(\mu^Q_{\theta(y,x_Q)} ,\cD_{d_j}) - C).
\end{align*}
We used property \eqref{enum:refining-partitions} in the first line, and property \eqref{enum:concavity-of-entropy} in the third line. In the fourth line we invoked  Lemma \ref{lem:linearization},  and in the fifth we appealed to the fact that $\Pi_{\theta(y,x_Q)}$, being linear, commutes with scalings.
\end{proof}

\begin{remark}
If in Proposition \ref{prop:entropy-of-pinned-dist-measures} we assume that $d_j \le m_j + T$, for some constant $T\ge 1$, then the same conclusion holds, except that the constant $C$ will now depend also on $T$. Indeed, the assumption $d_j \le m_j+1$ is used only in \eqref{eq:linearization} in the proof of Lemma \ref{lem:linearization}. If instead we only have $d_j \le m_j+T$, then the factor $2$ in \eqref{eq:linearization} has to be replaced by $2^T$ but in the rest of the argument this only affects the value of the constant $C$.
\end{remark}

\section{Directions determined by two measures, and good vantage points for multi-scale projections}
\label{sec:directions}

\subsection{A quantitative circular projection theorem}

We begin by reviewing the main properties of conditional measures on lines. Let $\nu\in\cP([0,1]^2)$. Given $x\in\R$ and $\theta\in S^1$, we define
\[
\nu_{\theta,x} = \lim_{r\downarrow 0} \frac{1}{2r} \nu|_{T(\theta,x,r)},
\]
if the limit exists, where the limit denotes weak convergence, and $T(\theta,x,r)$ is the tube of width $2r$ around the line $\Pi_\theta^{-1}(x)$. If $y\in \R^2$, we also write $\nu_{\theta,y}=\nu_{\theta,\Pi_\theta y}$, again if the latter exists.

The measures  $\nu_{\theta,x}$ are known as \emph{conditional} or \emph{sliced} measures. If $\cE_1(\nu)<+\infty$, then for  almost all $\theta$ the projected measure $\nu_\theta$ is absolutely continuous (this follows e.g. from Theorem \ref{thm:integral-sobolev-energy}), the conditional measures exist for (Lebesgue) almost all $x$, and the disintegration formula
\[
\nu =\int \nu_{\theta,x} \,dx
\]
holds. In particular, $\nu_{\theta,x}(\R^2)=\nu_\theta(x)$ for almost all $(\theta,x)$.  See \cite[Chapter 10]{Mattila95} for more details.

The following theorem is inspired by the results of Mattila and Orponen in \cite{MattilaOrponen16} concerning spherical projections (in particular \cite[Theorem 5.2]{MattilaOrponen16}, although we do not pay attention to the dimension of the intersections). We need, however, a quantitative formulation, and this requires us to deal with Sobolev norms. We present a planar version, but it can easily be extended to arbitrary dimensions.

\begin{theorem} \label{thm:large-direction-set}
There is an absolute $\tau>0$ such that the following holds. Let $\mu,\nu$ be probability measures on $[0,1]^2$ with $\cE_s(\mu),\cE_s(\nu)<\infty$ for some $s\in (1,2)$. Then
\[
(\mu\times\sigma)\{ (y,\theta):  |\nu_{\theta,y}|\ge\tau \} \gtrsim \left(\cE_s(\mu)\cE_s(\nu)\right)^{-\frac{1}{(s-1)}}.
\]
Here $|\nu_{\theta,y}|$ is the total mass of the conditional measure $\nu_{\theta,y}$, and the statement $|\nu_{\theta,y}|\ge \tau$ should be understood as implying in particular the existence of $\nu_{\theta,y}$.
\end{theorem}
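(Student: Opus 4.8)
The plan is to set up a weighted incidence count between the support of $\mu$ and a $\delta$-discretized version of $\nu$, using the tubes $T(\theta,y,\delta)$ as the geometric objects, and then to extract the measure estimate from an $L^2$ (energy) bound on the total incidence mass. First I would fix a small scale $\delta>0$ and replace both measures by their discretizations $\mu^{(m)},\nu^{(m)}$ at scale $\delta=2^{-m}$; by Lemma \ref{lem:energy-of-discretization}, passing to these discretizations only changes the energies by a bounded factor, so it suffices to work with absolutely continuous measures with bounded densities and then let $m\to\infty$, using a compactness/limiting argument to recover the statement about the actual conditional measures $\nu_{\theta,y}$ (this is where the convention ``$|\nu_{\theta,y}|\ge\tau$ implies existence'' gets used). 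For such discretized measures, the quantity $\nu_{\theta,y}$ is, up to constants, $\delta^{-1}\nu^{(m)}(T(\theta,y,\delta))$, i.e. the average of the projected density $\Pi_\theta(\nu^{(m)})$ over the interval of length $\approx\delta$ around $\Pi_\theta(y)$.

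The heart of the argument is a second-moment computation. Consider the bilinear quantity
\[
 I = \int_{S^1}\int \left(\delta^{-1}\nu^{(m)}(T(\theta,y,\delta))\right) d\mu^{(m)}(y)\, d\sigma(\theta),
\]
which (after unwinding the definition of the tube as a preimage of an interval) is comparable to $\int_{S^1} \langle \Pi_\theta(\mu^{(m)}), \Pi_\theta(\nu^{(m)})\rangle\, d\sigma(\theta)$ up to the five-times-interval issue already handled in Lemma \ref{lem:discretize-project}. By polarization of Theorem \ref{thm:integral-sobolev-energy} (with $\gamma=(s-1)/2\in(0,1/2)$, so that $1+2\gamma=s$), this integral is comparable to the mixed energy $\iint 2^{s|x\wedge z|}\,d\mu(x)\,d\nu(z)$, which by Cauchy--Schwarz is at most $\cE_s(\mu)^{1/2}\cE_s(\nu)^{1/2}$ — so $I \lesssim (\cE_s(\mu)\cE_s(\nu))^{1/2}$, a finite first-moment bound. (One could alternatively bound $I$ directly without even invoking the projection theorem, by Cauchy--Schwarz on the tube integral, but going through Theorem \ref{thm:integral-sobolev-energy} keeps everything in the language already developed.) For the lower bound one uses that $\int \delta^{-1}\nu^{(m)}(T(\theta,y,\delta))\,d\mu^{(m)}(y)$, the average over $\theta$ of the total conditional mass against $\mu$, has a clean lower bound: since $\int \nu_{\theta,x}(\R^2)\,dx = 1$ for a.e. $\theta$, and since $\mu$ is a probability measure, we get $I \approx 1$ from below after integrating in $\theta$ — here I need to be a little careful that the mass of $\nu^{(m)}$ seen along $\mu$-typical slices is not concentrated away from $\supp\mu$, but this is automatic because $\int \Pi_\theta(\mu^{(m)})\Pi_\theta(\nu^{(m)}) \geq$ (something bounded below) fails in general; so in fact one should run the argument the other way — use the \emph{upper} bound on $I$ together with a reverse bound to localize mass.

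Let me correct the logic: since $I \gtrsim 1$ is \emph{false} in general (if $\mu,\nu$ are far apart the mixed energy is $\approx 1$, not small, actually it is bounded below by $1$ — so $I \gtrsim 1$ \emph{is} true), the correct deduction is a Chebyshev/Paley--Zygmund argument. We have the first-moment lower bound $I\gtrsim 1$ and we need a \emph{second}-moment upper bound to conclude that the set where $\delta^{-1}\nu^{(m)}(T(\theta,y,\delta))$ is bounded below by a constant $\tau$ has $(\mu\times\sigma)$-measure $\gtrsim (\cE_s(\mu)\cE_s(\nu))^{-1/(s-1)}$. So the real task is to estimate
\[
 J = \int_{S^1}\int \left(\delta^{-1}\nu^{(m)}(T(\theta,y,\delta))\right)^2 d\mu^{(m)}(y)\, d\sigma(\theta) = \int_{S^1} \int \Pi_\theta(\nu^{(m)})(\Pi_\theta y)^2\, d\mu^{(m)}(y)\,d\sigma(\theta),
\]
which is $\int_{S^1} \int g_\theta^2\, d(\Pi_\theta\mu^{(m)})\, d\sigma(\theta)$ with $g_\theta$ the density of $\Pi_\theta\nu^{(m)}$. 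Bounding $\|g_\theta\|_\infty$ is too lossy; instead I would use Hölder with the exponent dictated by $s$: $\int g_\theta^2\,d(\Pi_\theta\mu) \le \|g_\theta\|_{L^p}^2 \|\Pi_\theta\mu\|_{L^{p'}}$ or, more cleanly, interpolate the $L^2$ Sobolev bound (giving control of $\|g_\theta\|_{2,\gamma}$, hence of a fractional-order quantity) against the trivial $L^1$ mass bound, then integrate in $\theta$ using Theorem \ref{thm:integral-sobolev-energy} again for the $\nu$ factor and the $s$-energy hypothesis on $\mu$ for the other factor. This produces $J \lesssim (\cE_s(\mu)\cE_s(\nu))^{\alpha}$ for an exponent $\alpha$ that, after optimizing the interpolation, comes out as the power appearing in the theorem. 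Then Paley--Zygmund, $(\mu\times\sigma)\{ \delta^{-1}\nu^{(m)}(T)\ge \tfrac12 I/ (\text{something}) \} \gtrsim I^2/J \gtrsim (\cE_s(\mu)\cE_s(\nu))^{-1/(s-1)}$, finishes it, and a final limiting argument in $m$ (using that the $\nu^{(m)}$-mass of shrinking tubes converges to $\nu_{\theta,y}(\R^2)$ for the relevant $(\theta,y)$, plus Fatou) transfers the conclusion to the genuine conditional measures. \emph{The main obstacle} I anticipate is precisely pinning down the correct interpolation exponent in the second-moment bound $J$ so that it matches $(s-1)^{-1}$ in the statement — getting the bookkeeping of the Sobolev smoothing order $\gamma=(s-1)/2$ against the energy exponents exactly right — and, secondarily, making the passage from discretized tube masses back to honest sliced measures $\nu_{\theta,y}$ rigorous (ensuring the limit is taken along a subsequence on which convergence holds $(\mu\times\sigma)$-a.e. on the relevant set, so that "$|\nu_{\theta,y}|\ge\tau$" is genuinely witnessed).
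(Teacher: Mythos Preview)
Your first-moment lower bound $I \gtrsim 1$ is correct and is exactly the paper's starting point (it is \cite[Eq.~(3.1)]{MattilaOrponen16}, i.e.\ Plancherel plus the trivial fact that $\iint |x-y|^{-1}\,d\mu\,d\nu\gtrsim 1$ for probability measures on the unit square). The problem is the second half of your plan.

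The Paley--Zygmund step needs a uniform bound on
\[
J=\int_{S^1}\int \big(\Pi_\theta\nu^{(m)}(\Pi_\theta y)\big)^2\,d\mu^{(m)}(y)\,d\sigma(\theta)
= \int_{S^1}\int g_\theta(x)^2 f_\theta(x)\,dx\,d\sigma(\theta),
\]
and this bound simply is not available for the full range $s\in(1,2)$. The Sobolev control you have is $g_\theta\in H^{(s-1)/2}$ on average in $\theta$, which embeds only into $L^{p}$ for $p<2/(2-s)$. With $\mu=\nu$ one needs $g_\theta\in L^3$, forcing $s>4/3$; for $s$ close to $1$ the integral $\int g_\theta^2 f_\theta$ can diverge (equivalently, $J\to\infty$ as $m\to\infty$), and Paley--Zygmund returns nothing. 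No amount of $L^1$/$H^\gamma$ interpolation fixes this, because the obstruction is genuine: for an $s'$-regular measure with $s'\in(1,4/3)$ the projected density is a.s.\ not in $L^3$, yet the theorem still asserts a lower bound $\gtrsim 1$ on the $(\mu\times\sigma)$-measure. So the ``main obstacle'' you flag is not a bookkeeping issue with exponents; it is a real gap.

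The paper avoids the second moment entirely. Instead of Paley--Zygmund it introduces a \emph{frequency cutoff}: mollify $\nu_\theta$ by $\phi_K$ with $\widehat{\phi_K}$ supported in $[-K,K]$. Then one has the trivial bound $\|\phi_K*\nu_\theta\|_\infty\lesssim K$, while the high-frequency tail $\iint_{|\xi|\ge K}|\widehat{\mu_\theta}\,\widehat{\nu_\theta}|\,d\xi\,d\sigma$ is $\lesssim K^{1-s}\cE_s(\mu)^{1/2}\cE_s(\nu)^{1/2}$ by Cauchy--Schwarz and Theorem~\ref{thm:integral-sobolev-energy}. Choosing $K\approx(\cE_s(\mu)\cE_s(\nu))^{1/(s-1)}$ makes this error (and the analogous error $\int\|\nu_\theta-\phi_K*\nu_\theta\|_2^2\,d\sigma$) smaller than an absolute constant, so $\iint_{\nu_\theta>\tau}\mu_\theta\cdot(\phi_K*\nu_\theta)\gtrsim 1$. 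The $L^\infty$ bound then converts this directly into $(\mu\times\sigma)\{|\nu_{\theta,y}|\ge\tau\}\gtrsim 1/K$, which is the claim. In other words, the right move is a truncation at a scale dictated by the energies, not a raw second-moment estimate; your discretization in $m$ would only play the role of the mollifier if you froze $2^{-m}\approx 1/K$ rather than sending $m\to\infty$, and even then you would be projecting a $2$-dimensional smoothing rather than smoothing the $1$-dimensional projection, which does not give the needed $\|g_\theta\|_\infty\lesssim K$.
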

\begin{proof}
In the course of the proof, $C_i$ denote positive absolute constants. To begin, recall that $\mu_\theta$ and $\nu_\theta$ are absolutely continuous (with $L^2$ density) for almost all $\theta$; this follows e.g. from Theorem \ref{thm:integral-sobolev-energy}. Using Plancherel's formula and the expression of the (mutual) energy in terms of the Fourier transform, it was shown in \cite[Eq. (3.1)]{MattilaOrponen16} that
\begin{equation} \label{eq:mutual-energy-large}
\iint \mu_\theta(x)\nu_\theta(x) \,dx\,d\sigma(\theta) = C_1 \int |x-y|^{-1}\,d\mu(x)\,d\nu(y) \ge C_2,
\end{equation}
where the inequality holds since $\mu,\nu$ are probability measures on $[0,1]^2$.  On the other hand, letting $1+2\gamma=s$,
\begin{equation}
\begin{split}
\iint_{|\xi|\ge K}| \wh{\mu}_\theta(\xi)\wh{\nu}_\theta(\xi)| d\xi\,d\sigma(\theta) &\le K^{-2\gamma}  \iint |\xi|^{\gamma}  |\wh{\mu}_\theta(\xi)||\xi|^{\gamma}|\wh{\nu}_\theta(\xi) |d\xi\,d\sigma(\theta)  \\
\label{eq:large-frequencies-decay} &\le K^{-2\gamma} \int \|\mu_\theta\|_{2,\gamma} \|\nu_\theta\|_{2,\gamma} \,d\sigma(\theta) \\
&\le K^{-2\gamma} \left(\int \|\mu_\theta\|_{2,\gamma}^2 \,d\sigma(\theta)\right)^{1/2} \left(\int \|\nu_\theta\|_{2,\gamma}^2 \,d\sigma(\theta)\right)^{1/2}  \\
&\lesssim K^{1-s} \cE_s(\mu)^{1/2} \cE_s(\nu)^{1/2}.
\end{split}
\end{equation}
We applied Cauchy-Schwartz in the second and third line, and Theorem \ref{thm:integral-sobolev-energy} in the last line. (Here and in the sequel we appeal to Theorem \ref{thm:dyadic-to-euclidean-energy} to pass between dyadic and Euclidean energies.) Let $\phi:\R\to\R$ be Schwartz function such that $\wh{\phi}:\R\to\R$  and $\wh{\phi}|_{[-1,1]}\equiv 1$, and write $\phi_K(x)=K\phi(K x)$. We deduce from \eqref{eq:mutual-energy-large}, \eqref{eq:large-frequencies-decay} and Plancherel that
\begin{equation}
\begin{split} \label{eq:mutual-energy-approx-large}
\iint \mu_\theta(x) \phi_K*\nu_\theta(x) \,dx\,d\sigma(\theta) &= \iint \wh{\phi_K}(\xi) \wh{\mu}_\theta(\xi) \overline{\wh{\nu}_\theta(\xi)}\,d\xi\,d\sigma(\theta) \\
&\ge C_2 - C_3 K^{1-s} \cE_s(\mu)^{1/2} \cE_s(\nu)^{1/2}.
\end{split}
\end{equation}
Likewise, applying \eqref{eq:large-frequencies-decay} with $\mu=\nu$ and Plancherel, we get
\begin{equation*}
\int \| \nu_\theta - \phi_K*\nu_\theta \|_2^2 \,d\sigma(\theta) \lesssim K^{1-s}\cE_s(\nu).
\end{equation*}
Given $\tau\in (0,1)$ (to be chosen momentarily), pick $K$ such that
\[
\tau=K^{(1-s)/2} \cE_s(\mu)^{1/2} \cE_s(\nu)^{1/2}.
\]
Applying Cauchy-Schwartz twice and Theorem \ref{thm:integral-sobolev-energy}, we further estimate
\begin{align*}
\iint_{ \nu_\theta(x)\le \tau} \mu_\theta(x)|\phi_K*\nu_\theta(x)| &\, dx\, d\sigma(\theta) \le \iint  \mu_\theta(x) (|\nu_\theta(x)-\phi_K*\nu_\theta(x)|+\tau)\,dx\,d\sigma(\theta)\\
&\le \tau+\int \|\mu_\theta\|_2\|\nu_\theta-\phi_K*\nu_\theta\|_2  \,d\sigma(\theta) \\
&\le \tau+\left(\int  \|\mu_\theta\|_2^2\,d\sigma(\theta)\right)^{1/2}\left(\int \|\nu_\theta-\phi_K*\nu_\theta\|_2^2 \,d\sigma(\theta)\right)^{1/2} \\
&\le \tau+C_4 K^{(1-s)/2} \cE_s(\mu)^{1/2} \cE_s(\nu)^{1/2}\\
&\lesssim \tau.
\end{align*}
Combining this with  \eqref{eq:mutual-energy-approx-large} and using that $K^{1-s}\cE_s(\mu)^{1/2}\cE_s(\nu)^{1/2} \lesssim \tau^2\le \tau$, we deduce that
\[
\iint_{\nu_\theta(x)>\tau} \mu_\theta(x)\phi_K*\nu_\theta(x) \, dx\, d\sigma(\theta)\ge C_2- C_5\tau.
\]
We now fix $\tau$ (and hence $K$) such that $C_2-C_5\tau=C_2/2$, i.e.
\[
 K = C_6 \left(\cE_s(\mu)\cE_s(\nu)\right)^{\frac{1}{(s-1)}}.
\]
Since $\|\phi_K*\nu_\theta\|_\infty \le K\|\phi\|_\infty \lesssim K$, we deduce that
\begin{align*}
\frac{C_2}{2} &\le \iint_{\nu_\theta(x)\ge\tau} \mu_\theta(x)|\phi_K*\nu_\theta(x)|\,dx\,d\sigma(\theta) \\
&\lesssim K \int \mu_\theta\{ x: \nu_\theta(x)\ge\tau \}\,d\sigma(\theta)\\
&= K \int \mu_\theta\{ x: |\nu_{\theta,x}| \ge \tau \} \,d\sigma(\theta)\\
&= K \int \mu\{ y:  |\nu_{\theta,y}|\ge\tau \} \,d\sigma(\theta).
\end{align*}
From this and Fubini, we conclude that
\[
(\mu\times\sigma)\{ (y,\theta):  |\nu_{\theta,y}|\ge\tau \} \gtrsim \frac{1}{K},
\]
as desired.
\end{proof}

\subsection{Finding good vantage points}

We are now ready to apply Theorem \ref{thm:large-direction-set} to establish one of the main steps in the proof of Theorem \ref{thm:distance-set}. We begin with some notation.

Let $(m_j)_{j\in\N_0}$ be an increasing sequence such that $m_0=0$. Write $d_j=m_{j+1}-m_j$. Given $x$ such that $\mu(D_{m_j}(x))>0$, write
\[
\mu^{x,j}=\mu^{D_{m_j}(x),(d_j)}.
\]
That is, $\mu^{x,j}$ is the discretization at scale $d_j$ of the conditional measure on $D_{m_j}(x)$.
\begin{theorem} \label{thm:good-base-point}
Assume  $m_j + j\le  m_{j+1}$ for all $j\ge j_1$. Let $\mu,\nu\in\cP([0,1)^2)$ have disjoint supports and satisfy $\cE_{s'}(\mu),\cE_{s'}(\nu)<\infty$ for some $s'\in (1,2)$.  Given $\e>0$, $s\in [1,2)$ and $j_0\in\N$, define
\[
\Theta_x = \bigcap_{j=j_0}^\infty \Theta_x^{(j)}, \quad\text {where } \Theta_x^{(j)}=\left\{ \theta: \|\mu^{x,j}_\theta\|_2^2 \le 2^{\e d_j} \cE_s(\mu^{x,j})\right\}.
\]
If $j_0\ge j_1$ is fixed large enough (in terms of $\e, s',\cE_{s'}(\mu)$ and $\cE_{s'}(\nu)$), then
\[
(\mu \times \nu) \{ (x,y): \theta(x,y)\in \Theta_x \} \gtrsim 1,
\]
where the implied constant depends only on $\e, s',\cE_{s'}(\mu),\cE_{s'}(\nu),\dist(\supp(\mu),\supp(\nu))$. In particular, there exists $y\in \supp(\nu)$ such that
\[
\mu\{ x:\theta(x,y)\in\Theta_x \} \gtrsim 1.
\]
\end{theorem}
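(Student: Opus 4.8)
The plan is to deduce Theorem~\ref{thm:good-base-point} by applying the quantitative circular projection theorem (Theorem~\ref{thm:large-direction-set}) at each fixed scale $m_j$ to the conditional measures living on dyadic cubes, and then summing a geometric-type tail over $j\ge j_0$. First I would fix attention on a single scale $j$. For each $Q\in\cD_{m_j}$ with $\mu(Q),\nu(Q')>0$, the renormalized conditionals $\mu^Q,\nu^{Q''}$ are probability measures on $[0,1)^2$, and one checks (via Lemma~\ref{lem:multiscale-energy}, or rather its analogue giving an \emph{average} bound) that $\sum_{Q}\mu(Q)^2\,\cE_{s'}(\mu^{Q,(d_j)})\lesssim 2^{-s' m_j}\cE_{s'}(\mu)$, so that on average over $Q$ the localized energy $\cE_{s'}(\mu^{x,j})$ is controlled. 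The point is that, because the $m_j$ are chosen with $d_j$ large and the hyperdyadic growth makes $\sum_j 2^{-\delta m_j}<\infty$, the exceptional contributions at each scale are summable.

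Next, for the pair of cubes $(Q,Q')$ at scale $m_j$ I would invoke Theorem~\ref{thm:large-direction-set} with $\mu\leftarrow\mu^Q$, $\nu\leftarrow \nu^{Q'}$ (restricted/renormalized appropriately so the supports are disjoint — this is where $\dist(\supp\mu,\supp\nu)>0$ and the separation of distinct cubes enter) to produce, for most base points $x$ in $Q$, a direction $\theta$ with $|\nu^{x,j}_{\theta,y}|\gtrsim \tau$; in particular $\|\mu^{x,j}_\theta\|_2$ is not too large, which is essentially the condition defining $\Theta_x^{(j)}$. Actually the cleaner route is: the set of ``bad directions'' $S^1\setminus\Theta_x^{(j)}=\{\theta:\|\mu^{x,j}_\theta\|_2^2> 2^{\e d_j}\cE_s(\mu^{x,j})\}$ has $\sigma$-measure $\lesssim 2^{-\e d_j}$ by Chebyshev applied to Theorem~\ref{thm:integral-sobolev-energy} (which says $\int\|\mu^{x,j}_\theta\|_{2,\gamma}^2 d\sigma\approx \cE_s(\mu^{x,j})$ for the appropriate $\gamma$ with $1+2\gamma=s$, noting $\|\cdot\|_2\le\|\cdot\|_{2,\gamma}$). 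Meanwhile Theorem~\ref{thm:large-direction-set} forces the direction $\theta(x,y)$ determined by a genuinely positive-measure set of pairs $(x,y)$ to land in a set of \emph{positive} $\sigma$-measure (the set $\{\theta:|\nu_{\theta,y}|\ge\tau\}$), uniformly in scale. Combining these two facts: at scale $j$, the set of pairs $(x,y)$ with $\theta(x,y)\in\Theta_x^{(j)}$ has $(\mu\times\nu)$-measure at least $c - O(2^{-\e d_j})$ for an absolute $c>0$ coming from Theorem~\ref{thm:large-direction-set} — one needs the radial-projection image to be positive measure \emph{and} to mostly avoid the thin bad set, which works precisely because the bad set is polynomially small while the good set has fixed positive measure once $j_0$ (hence $d_j$) is large.

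Finally I would sum over $j\ge j_0$: since $d_j\ge j$ for $j\ge j_1$, we have $\sum_{j\ge j_0} 2^{-\e d_j}\le \sum_{j\ge j_0}2^{-\e j}\to 0$ as $j_0\to\infty$, so choosing $j_0$ large (depending on $\e,s',\cE_{s'}(\mu),\cE_{s'}(\nu)$) makes the total measure of pairs failing $\theta(x,y)\in\Theta_x^{(j)}$ for \emph{some} $j\ge j_0$ strictly smaller than the lower bound $\gtrsim 1$ coming from Theorem~\ref{thm:large-direction-set}; hence $(\mu\times\nu)\{(x,y):\theta(x,y)\in\bigcap_{j\ge j_0}\Theta_x^{(j)}=\Theta_x\}\gtrsim 1$. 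The ``in particular'' clause is then just Fubini: if $\int \mu\{x:\theta(x,y)\in\Theta_x\}\,d\nu(y)\gtrsim 1$, there must exist $y\in\supp(\nu)$ with $\mu\{x:\theta(x,y)\in\Theta_x\}\gtrsim 1$.

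The main obstacle I expect is the bookkeeping that makes the constant from Theorem~\ref{thm:large-direction-set} genuinely \emph{scale-independent} and large enough to survive the subtraction of all the tail errors — in particular, applying Theorem~\ref{thm:large-direction-set} not to $\mu,\nu$ globally but to the zoomed-in conditional measures on pairs of cubes, and verifying that the relevant energies $\cE_s(\mu^{x,j})$ appearing in the definition of $\Theta_x^{(j)}$ are finite and suitably normalized for $\mu$-a.e.\ $x$ (using the $s'>s$ slack and the multiscale energy decomposition, Lemma~\ref{lem:multiscale-energy}, to absorb the $2^{\e d_j}$ loss). A secondary subtlety is reconciling the $L^2$-based condition defining $\Theta_x^{(j)}$ with the total-mass-of-conditional-measure conclusion of Theorem~\ref{thm:large-direction-set}; the disintegration identity $\nu_{\theta,x}(\R^2)=\nu_\theta(x)$ a.e.\ and Lemma~\ref{lem:discretize-project} are the bridges, but one has to be careful that these hold at the discretized level $\mu^{x,j}$ rather than for the limiting conditional measures.
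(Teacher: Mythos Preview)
Your Chebyshev step is right and matches the paper: $\sigma(S^1\setminus\Theta_x^{(j)})\lesssim 2^{-\e d_j}$ for every $x$, hence $\sigma(S^1\setminus\Theta_x)\lesssim 2^{-\e j_0}$. But from that point the argument has a genuine gap. You want $(\mu\times\nu)\{(x,y):\theta(x,y)\notin\Theta_x\}$ to be small, i.e.\ that for typical $x$ the radial projection $y\mapsto\theta(x,y)$ pushes $\nu$ to something giving small mass to the thin bad arc $S^1\setminus\Theta_x$. That would follow if this pushforward had bounded (or $L^2$) density on $S^1$, but nothing you have established gives that --- indeed, this absolute continuity is essentially what Theorem~\ref{thm:large-direction-set} is there to supply, so you cannot get it for free from the smallness of $\sigma(S^1\setminus\Theta_x)$ alone. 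Relatedly, your reading of Theorem~\ref{thm:large-direction-set} is off: it bounds a $(\mu\times\sigma)$-set from below, not a $(\mu\times\nu)$-set, and it says nothing directly about ``the direction $\theta(x,y)$ determined by pairs $(x,y)$''.

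The paper's route fixes exactly this. Apply Theorem~\ref{thm:large-direction-set} \emph{once, globally}, to $\mu$ and a discretization $\nu^{(m)}$, getting $(\mu\times\sigma)\{(x,\theta):|\nu^{(m)}_{\theta,x}|\ge\tau\}\gtrsim 1$. Subtract the bad set \emph{in $(\mu\times\sigma)$} --- this is where your Chebyshev bound is used, and it works precisely because $\sigma$ is the second factor. One is left with many $x$ for which $\sigma\{\theta\in\Theta_x:|\nu^{(m)}_{\theta,x}|\ge\tau\}\gtrsim 1$. The step you are missing is then the \emph{coarea formula} for the map $g_x(y)=\theta(x,y)$: up to a bounded Jacobian it gives
\[
\nu^{(m)}\{y:\theta(x,y)\in\Theta_x\}\gtrsim \int_{\Theta_x}|\nu^{(m)}_{\theta,x}|\,d\sigma(\theta),
\]
and the integrand is $\ge\tau$ on a set of $\sigma$-measure $\gtrsim 1$, so the right-hand side is $\gtrsim 1$. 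A weak-limit argument in $m$ then transfers this to $\nu$. Your plan to invoke Theorem~\ref{thm:large-direction-set} scale-by-scale on pairs of dyadic cubes is unnecessary and would not yield scale-independent constants; the theorem is applied once, to the original measures.
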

\begin{proof}
In the course of this particular proof, any constants implicit in the $\lesssim$ notation are allowed to depend on $\e, s',\cE_{s'}(\mu)$, $\cE_{s'}(\nu)$ and $\dist(\supp(\mu),\supp(\nu))$.

Let $j_0\ge j_1$. By Theorem \ref{thm:integral-sobolev-energy} and Markov's inequality, and since $d_j\ge j$ for $j\ge j_0$,
\begin{equation} \label{eq:small-set-of-exceptions}
\sigma(S^1\setminus \Theta_{x}) \le \sum_{j=j_0}^\infty \sigma(S^1\setminus \Theta_x^{(j)}) \lesssim  \sum_{j=j_0}^\infty 2^{-\e d_j} \lesssim 2^{-\e j_0},
\end{equation}
uniformly in $x$.  Note that $\cE_{s'}(\nu^{(m)})\le \cE_{s'}(\nu) \lesssim 1$. Hence, by Theorem \ref{thm:large-direction-set}, Fubini, and Eq. \eqref{eq:small-set-of-exceptions}, if $j_0$ is fixed large enough (in terms of $\cE_{s'}(\mu)$, $\cE_{s'}(\nu)$ and $\e$), then we can ensure that
\[
(\mu\times\sigma)\{ (x,\theta): \theta\in \Theta_{x} , |\nu^{(m)}_{\theta,x}|\ge \tau \}\gtrsim 1,
\]
for any $m\in\N$. Hence, there are Borel sets $G_m$ with $\mu(G_m)\gtrsim 1$, such that if $x\in G_m$, then
\begin{equation} \label{eq:large-measure-conditional-nu-large}
\sigma\{ \theta\in \Theta_{x} : |\nu^{(m)}_{\theta,x}|\ge \tau\} \gtrsim 1.
\end{equation}
Note that, since $\nu^{(m)}$ is absolutely continuous with $\cD_m$-measurable density, then
\begin{equation} \label{eq:conditional-nu-abs-cont}
|\nu^{(m)}_{\theta,x}|= \int_{\ell(x,\theta)} \nu^{(m)} \,d\cH^1,  \quad\theta\in S^1\setminus \{(0,\pm 1), (\pm 1, 0) \},
\end{equation}
where $\ell(x,\theta)$ is the line through $x$ with direction $\theta$. Let $g_x(y)=\theta(x,y)$, considered as a smooth map from a neighborhood of $\supp(\nu)$ to $S^1$, and denote the corresponding Jacobian by $Jg_x(y)$. Note that $J g_x(y)\lesssim 1$ when $x,y$ range in neighborhoods of $\supp(\mu),\supp(\nu)$ respectively (this is the estimate that depends on the distance between the supports). Applying the coarea formula (see e.g. \cite[Theorem 3.11]{EvansGariepy15}) to this map and the Borel function $y\mapsto \nu^{(m)}(y) \mathbf{1}_{\{\theta(x,y)\in\Theta_x\}}$, still for a fixed value $x\in G_m$, we get
\begin{align*}
\nu^{(m)}\{ y: \theta(x,y)\in \Theta_x \} &\gtrsim \int \nu^{(m)}(y)\mathbf{1}_{\{\theta(x,y)\in\Theta_x\}} J g_x(y) \,d\cH^2(y)\\
 &= \iint_{g_x^{-1}(\theta)}  \nu^{(m)}(y)\mathbf{1}_{\{\theta(x,y)\in\Theta_x\}} \,d\cH^1(y) \,d\sigma(\theta)\\
 &= \int_{\Theta_x} \int_{\ell(x,\theta)} \nu^{(m)}(y)\,d\cH^1(y) \,d\sigma(\theta)\\
 &= \int_{\Theta_x} |\nu_{\theta,y}^{(m)}|\,d\sigma(\theta)\\
 &\gtrsim 1,
\end{align*}
where we used \eqref{eq:large-measure-conditional-nu-large} and \eqref{eq:conditional-nu-abs-cont} in the last two lines. Recalling that $\mu(G_m)\gtrsim 1$, we deduce that
\[
\mu \times \nu^{(m)} \{ (x,y): \theta(x,y)\in \wt{\Theta}^{(j)}_x \} \gtrsim 1,\quad\text{where }\wt{\Theta}^{(j)}_x = \bigcap_{i=j_0}^j \Theta^{(i)}_x \supset \Theta_x.
\]
(Here we use that the supports of $\mu$ and $\nu^{(m)}$ are uniformly separated for $m$ large.) Endow $[0,1)^2\times [0,1)^2$ with the product dyadic metric. Note that $\mu\times\nu^{(m)}\to \mu\times\nu$ weakly in this topology. Moreover, the set $\{ (x,y): \theta(x,y)\in \wt{\Theta}^{(j)}_x \}$ is compact. Letting first $m\to\infty$ and then $j\to\infty$, we conclude that
\begin{align*}
\mu \times \nu \{ (x,y): \theta(x,y)\in \Theta_x \} &= \lim_{j\to\infty} (\mu\times\nu) \{ (x,y): \theta(x,y)\in \wt{\Theta}_x^{(j)}\}\\
&\ge \limsup_{j\to\infty}\left( \limsup_{m\to\infty} (\mu \times \nu^{(m)}) \{ (x,y): \theta(x,y)\in \wt{\Theta}_x^{(j)} \} \right)\\
&\gtrsim 1,
\end{align*}
as we wanted to show.
\end{proof}

\section{Proof of Theorem \ref{thm:distance-set}}
\label{sec:proof}

The core of the proof of Theorem \ref{thm:distance-set} is contained in the following statement.
\begin{prop} \label{prop:distance-set-technical}
Given $0<t<1<s$, if $\e>0$ is sufficiently small in terms of $t$, then there exists $\delta=\delta(\e)>0$  such that the following holds.

Suppose $\mu\in\cP([0,1)^2)$ satisfies $\ubdim(\supp(\mu)) \le s+\delta$ and
\begin{equation} \label{eq:almost-exact-dimensional}
\mu(Q) \le C 2^{-(m-\delta)s}
\end{equation}
for all $m$ and all $Q\in\cD_m$, and some $C>1$. Assume also that $B\subset [0,1)^2$ is closed and disjoint from $\supp(\mu)$ with $\hdim(B)>1$. Then there are $y\in B$ and a set $A_1$ with $\mu(A_1)>0$ such that,  if $m$ has the form $\lfloor (1+\e)^k\rfloor$ for some sufficiently large $k\in\N$ (depending on $\e, t,s,\mu$, $B$), and $A_2\subset A_1$ satisfies $\mu(A_2)\ge k^{-2}\mu(A_1)$, then
\[
H_m(\Delta_y \mu_{A_2}) \ge t.
\]
\end{prop}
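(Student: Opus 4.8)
The plan is to combine the multi-scale energy decomposition (Lemma~\ref{lem:multiscale-energy}), the entropy-of-pinned-distance-measures formula (Proposition~\ref{prop:entropy-of-pinned-dist-measures}), and the existence of a good vantage point (Theorem~\ref{thm:good-base-point}). First I would normalize and set up an auxiliary measure $\nu$ supported on $B$: since $\hdim(B)>1$, Frostman's lemma gives $\nu\in\cP(B)$ with $\cE_{s'}(\nu)<\infty$ for some $s'\in(1,\hdim(B))$; shrinking $s'$ if necessary we may also assume $\cE_{s'}(\mu)<\infty$, because the upper box dimension bound $\ubdim(\supp\mu)\le s+\delta$ together with \eqref{eq:almost-exact-dimensional} forces $\mu$ to have finite $s'$-energy for $s'$ slightly above $1$ (by Lemma~\ref{lem:energy-to-correlation}, $\sum_j 2^{s'j}\sum_{Q\in\cD_j}\mu(Q)^2 \lesssim \sum_j 2^{s'j}\,2^{-(j-\delta)s}\cdot 2^{(j+O(\delta))(s+\delta)}$, which converges once $s'<1+(s-1)-O(\delta)$). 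I would then apply Theorem~\ref{thm:good-base-point} with this $\mu,\nu$ along the hyperdyadic scales $m_j=\lfloor(1+\e)^j\rfloor$ (which satisfy $m_j+j\le m_{j+1}$ for $j$ large, and even $d_j=m_{j+1}-m_j\to\infty$) to obtain, for a suitable small $\e'$ to be matched with $\e$, a point $y\in\supp(\nu)\subset B$ and a set $A_1=\{x:\theta(x,y)\in\Theta_x\}$ with $\mu(A_1)\gtrsim 1>0$, where $\Theta_x=\bigcap_{j\ge j_0}\Theta_x^{(j)}$ and $\theta\in\Theta_x^{(j)}$ means $\|\mu^{x,j}_\theta\|_2^2\le 2^{\e' d_j}\cE_s(\mu^{x,j})$.

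Next, fix $m=m_k=\lfloor(1+\e)^k\rfloor$ and $A_2\subset A_1$ with $\mu(A_2)\ge k^{-2}\mu(A_1)$. The goal is to bound $H_m(\Delta_y\mu_{A_2})$ from below. I would apply Proposition~\ref{prop:entropy-of-pinned-dist-measures} to $\mu_{A_2}$ with the scales $m_0<\dots<m_k$ (noting $d_j\le m_j+T$ for a fixed $T$ once $\e$ is small, so the remark after the proposition applies), with the base points $x_Q$ chosen so that $\theta(y,x_Q)\in\Theta_{x_Q}$ whenever possible; this gives
\[
H(\Delta_y\mu_{A_2},\cD_{m_k}) \;\ge\; -Ck \;+\; \sum_{j=0}^{k-1}\sum_{Q\in\cD_{m_j}} \mu_{A_2}(Q)\, H\!\left(\mu_{A_2}^{Q}{}_{\theta(y,x_Q)},\cD_{d_j}\right).
\]
For the inner entropies I would invoke Lemma~\ref{lem:L2-to-entropy}: $H(\rho_\theta,\cD_{d_j})\ge d_j\big(1-\tfrac{1}{d_j}\log\|(\rho_\theta)^{(d_j)}\|_2^2\big)$, and then Lemma~\ref{lem:discretize-project} to replace $\|(\rho_\theta)^{(d_j)}\|_2^2$ by $\|\Pi_\theta(\rho^{(d_j)})\|_2^2$ up to constants, where $\rho=\mu_{A_2}^Q$. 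The crucial monotonicity point (emphasized in the introduction) is that $A_2\subset A_1$ and hence, locally, $\mu_{A_2}^Q$ is a renormalized restriction of $\mu^Q$ up to the normalization factor $c_Q=\mu(Q\cap A_2)/\mu(Q)$; restriction does not increase $L^2$ norm or energy, so $\|\Pi_\theta((\mu_{A_2}^Q)^{(d_j)})\|_2^2 \le c_Q^{-2}\|\Pi_\theta((\mu^Q)^{(d_j)})\|_2^2 \approx c_Q^{-2}\|\mu^{Q,(d_j)}_\theta\|_2^2$, and for $x_Q$ with $\theta(y,x_Q)\in\Theta_{x_Q}$ this is $\le c_Q^{-2}\,2^{\e' d_j}\cE_s(\mu^{x_Q,j})$. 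This is where the near-optimal projection theorem at each scale enters: on the good directions the projected $L^2$ norm is controlled by the (local) $s$-energy up to a factor $2^{\e' d_j}$.

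Assembling, each inner term contributes at least $d_j\big(1 - \e' - \tfrac{1}{d_j}\log\cE_s(\mu^{Q,(d_j)}) - \tfrac{1}{d_j}\log c_Q^{-2} - O(1/d_j)\big)$. Summing the $\log c_Q^{-2}$ losses over $Q\in\cD_{m_j}$ against the weights $\mu_{A_2}(Q)$ is controlled by concavity of $\log$ and the bound $\mu_{A_2}(Q)\le k^2\mu(Q)\le k^2 C 2^{-(m_j-\delta)s}$, which (after dividing by $m_k$ and using $\sum_j d_j=m_k$) costs $O(\delta s)+o(1)$; likewise the $-Ck$ term divided by $m_k=\lfloor(1+\e)^k\rfloor$ is $o(1)$ since $k/(1+\e)^k\to 0$. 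The main remaining point, and the step I expect to be the real obstacle, is bounding the weighted average of $\tfrac{1}{d_j}\log\cE_s(\mu^{Q,(d_j)})$: here I would invoke Lemma~\ref{lem:multiscale-energy}, which says $\cE_s(\mu^{(m_k)})\approx\sum_j 2^{sm_j}\sum_{Q\in\cD_{m_j}}\mu(Q)^2\,\cE_s(\mu^{Q,(d_j)})$, together with the key consequence of $\hdim(\mu)=\pdim(\mu)$ encoded in hypothesis \eqref{eq:almost-exact-dimensional} and $\ubdim(\supp\mu)\le s+\delta$, namely that $\cE_s(\mu^{(m_k)})\le 2^{O(\delta)m_k}$. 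A convexity/averaging argument (Jensen applied to the normalized weights $2^{sm_j}\mu(Q)^2/\text{(total)}$, comparing with the total mass count $\sum_Q\mu(Q)=1$ and the $2^{sm_j}$ factors summing like $2^{sm_{k-1}}$) then forces the weighted average of $\tfrac{1}{d_j}\log\cE_s(\mu^{Q,(d_j)})$ to be $O(\delta)$ plus a term that vanishes because the scales $m_j$ grow geometrically (so that $m_{k-1}/m_k\to 1/(1+\e)$ strictly bounded below $1$ is precisely what lets us absorb the $2^{sm_j}$ weights). Choosing $\e$ small in terms of $t$ (to make $1-\e'$ and the $1/(1+\e)$-type losses close to $1$) and then $\delta$ small in terms of $\e$, we conclude $H_m(\Delta_y\mu_{A_2})=\tfrac{1}{m_k}H(\Delta_y\mu_{A_2},\cD_{m_k})\ge t$, as desired.
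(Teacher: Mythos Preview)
Your overall architecture matches the paper's: set up $\nu$ on $B$ via Frostman, apply Theorem~\ref{thm:good-base-point} to get $y$ and $A_1$, then feed Proposition~\ref{prop:entropy-of-pinned-dist-measures} the measure $\mu_{A_2}$ and bound the local entropies via Lemmas~\ref{lem:L2-to-entropy}--\ref{lem:discretize-project} and the good-direction property. The place where your argument breaks is the step you yourself flag as the obstacle: controlling $\sum_{j}\sum_{Q}\mu_{A_2}(Q)\log\cE_{s}(\mu^{Q,(d_j)})$ by a Jensen/averaging argument.

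The difficulty is a mismatch of weights. The multiscale formula (Lemma~\ref{lem:multiscale-energy}) controls $\sum_j 2^{s m_j}\sum_Q \mu(Q)^2\,\cE_{s}(\mu^{Q,(d_j)})$, with weights \emph{quadratic} in $\mu$, whereas your entropy sum carries the \emph{linear} weights $\mu_{A_2}(Q)\lesssim k^2\mu(Q)$. Jensen gives $\sum_Q \mu_{A_2}(Q)\log E_Q \le \log\bigl(\sum_Q \mu_{A_2}(Q)E_Q\bigr)$, but for ``light'' squares (those with $\mu(Q)$ much smaller than $2^{-s m_j}$) one may have $E_Q=\cE_{s}(\mu^{Q,(d_j)})$ as large as $\approx 2^{s d_j}$, and nothing in the energy formula prevents $\sum_Q \mu(Q)E_Q$ from being of order $2^{O(\e)m_j}$. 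After taking logs and summing in $j$ this produces a loss comparable to $m_k$, not $o(m_k)$, so the argument does not close. The comparison you sketch between the normalized weights $2^{sm_j}\mu(Q)^2/Z$ and $\mu(Q)$ is exactly where the light/heavy issue bites, and Jensen goes the wrong way to absorb it.

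What the paper does instead is a Markov-type throw-away. Writing $s_1=s-2\delta$ with $\delta=\e^2$, the hypotheses give $\cE_{s_1}(\mu)\lesssim 1$, so the $j$-th block of Lemma~\ref{lem:multiscale-energy} yields $2^{s_1 m_j}\sum_Q \mu(Q)^2 \cE_{s_1}(\mu^{Q,(d_j)})\lesssim 1$. On squares with $\mu(Q)\ge 2^{-(s_1+5\delta)m_j}$ (``heavy''), one replaces $\mu(Q)^2$ by $2^{-(s_1+5\delta)m_j}\mu(Q)$ to get $\sum_{Q\text{ heavy}}\mu(Q)\,\cE_{s_1}(\mu^{Q,(d_j)})\lesssim 2^{5\delta m_j}$; Markov then shows that squares with $\cE_{s_1}(\mu^{Q,(d_j)})\ge 2^{6\delta m_j}$ carry $\mu$-mass $\lesssim 2^{-\delta m_j}$. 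Combined with a second exclusion of squares where $\wt\mu(Q)\le 2^{-4\delta m_j}\mu(Q)$ (this is how the paper handles your $c_Q^{-2}$ term, rather than by Jensen), one obtains a family of ``good'' squares carrying $\wt\mu$-mass $\ge 1-2^{-\delta m_j/2}$ on which simultaneously $c_Q^{-1}\le 2^{4\delta m_j}$ and $\cE_{s_1}(\mu^{Q,(d_j)})\le 2^{6\delta m_j}$. Since $\delta=\e^2$ and $d_j\approx\e m_j$, both are $\le 2^{O(\e)d_j}$, and the local entropy on good squares is $\ge (1-O(\e))d_j$. Summing over $k/2\le j<k$ then gives $H_{m_k}(\Delta_y\mu_{A_2})\ge 1-O(\e)$. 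The missing ingredient in your proposal is precisely this good/bad dichotomy via Markov's inequality; once you insert it, the rest of your outline goes through essentially as in the paper.
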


We first show how to deduce Theorem \ref{thm:distance-set} from this proposition.
\begin{proof}[Proof of Theorem \ref{thm:distance-set}  (Assuming Proposition \ref{prop:distance-set-technical})]
To begin, note that it is enough to show that, given $t<1$,
\begin{equation} \label{eq:small-set-of-exceptional-pins}
\hdim(\{x:\hdim(\Delta_x A)< t\}) \le 1.
\end{equation}
Indeed, the claim with $t=1$ will then follow by taking a sequence $t_j\uparrow 1$. Fix, then, $0<t<1$ for the rest of the proof.

By replacing $A$ with a compact subset of dimension $\ge s-\delta/2$, it is enough to prove that if $A$ is compact and satisfies
\[
s-\delta/2 \le \hdim(A) \le \pdim(A) \le s,
\]
where $\delta$ is chosen small enough (in terms of $\e$, hence ultimately in terms of $t$), and $B\subset\R^2$  has Hausdorff dimension $>1$, then there is $y\in B$ such that
\begin{equation} \label{eq:claim-large-pinned-dim}
\hdim(\Delta_y A)\ge t.
\end{equation}
Since the map $y\to\hdim(\Delta_y A)$ can be checked to be Borel (for compact $A$), we may  also assume that $B$ is compact.

Using the scale and translation invariance of the problem, we can further assume that $A, B\subset [0,1)^2$. Furthermore, by picking a countable cover $B(x_i,r_i)$ of $B$ such that
\[
\hdim(A\setminus B(x_i,2r_i))>s- 2\delta/3,
\]
we can additionally assume that $A$ and $B$ are disjoint (now with $\hdim(A)>s-2\delta/3$).

By Frostman's Lemma (see \cite[Theorem 8.8]{Mattila95}), there exist $C_1>0$  and a measure $\nu$ supported on $A$ (both depending on $A$ and $\delta$), such that $\nu(B(x,r)) \le C_1 r^{s-\delta}$ for all $x\in \supp(\nu)$. On other hand,  using the characterization of packing dimension in terms of upper box dimension (see \cite[Proposition 3.8]{Falconer14}), we get that
\[
s\ge \pdim(A) = \inf\left\{ \sup_i \ubdim(A_i): A \subset \bigcup_i A_i\right\},
\]
and since $\ubdim(A_i)=\ubdim(\overline{A}_i)$, it follows that there is a closed set $A_0$ such that $\nu(A_0)>0$ and $\ubdim(A_0)<s+\delta$. We deduce that $\mu=\nu_{A_0}$ satisfies the properties in the statement of the Proposition \ref{prop:distance-set-technical}.

Let, then, $y\in B$ be the point given by Proposition \ref{prop:distance-set-technical}, and let $A_1$ be the corresponding set. Assume $\cH^u(\Delta_y A_1)=0$ for some $u$; our goal is to show that $u$ must be large. Let $\{ I_j\}$ be a cover of $\Delta_y(A_1)$ by dyadic intervals with $\sum_j |I_j|^u \le 1$, where the largest interval is very small. By enlarging the $I_j$, we obtain new intervals (still denoted $I_j$) such that $|I_j|=2^{-\lfloor (1+\e)^k\rfloor}$ for some $k=k(j)$, where the smallest $k$ is arbitrarily large, and
\begin{equation} \label{eq:hausdorff-sum-small}
\sum_j |I_j|^{u(1+\e)}\le 1.
\end{equation}
Pigeonholing, we can find a fixed (large) value of $k$ such that
\[
\mu(A_2) > k^{-2}\mu(A_1), \quad\text{where } A_2=\{ x\in A_1: |x-y|\in I_j \text{ with } |I_j| = 2^{-\lfloor (1+\e)^k\rfloor} \}.
\]
Now Proposition \ref{prop:distance-set-technical} ensures that $H_m(\Delta_y \mu_{A_2}) \ge t$ with $m=2^{\lfloor (1+\e)^k\rfloor}$; in particular, using \eqref{eq:upper-bound-entropy-size-partition},
\[
|\{j: |I_j|= 2^{-m} \}| \ge 2^{mt}.
\]
Combining this with \eqref{eq:hausdorff-sum-small}, we conclude that
\[
2^{mt} 2^{-m(u(1+\e))} \le 1.
\]
That is,
\[
\hdim(\Delta_y A) \ge \hdim(\Delta_y A_2) \ge u \ge \frac{t}{1+\e}.
\]
This establishes the claim \eqref{eq:claim-large-pinned-dim} with $t/(1+\e)$ in place of $t$ which, since we are allowed to take $\e$ arbitrarily small, finishes the proof.
\end{proof}

\begin{proof}[Proof of Proposition \ref{prop:distance-set-technical}]
Fix $0<t<1<s<1$ and a small $\e>0$ (to be determined in the course of the proof). Let $\delta=\e^2$, where $\e$ is small enough that $s-2\delta>1$. By assumption, there exists a constant $C>0$ such that
\begin{equation} \label{eq:almost-exact-dimension}
\mu(Q) \le C 2^{-(s-\delta)m} \quad\text{for all } Q\in\cD_m,
\end{equation}
and $\supp(\mu)$ can be covered by $C 2^{(s+2\delta)m}$ squares in $\cD_m$, in both cases for all $m\in\N$. In particular, letting $s_1=s-2\delta$, we have $\cE_{s_1}(\mu)<\infty$. In the $\lesssim$ notation below, the implicit constants are allowed to depend on $s, \delta$ and $C$, but not on the scales $m, j$ or $k$. Note that
\begin{equation} \label{eq:light-squares-small-total-mass}
\sum \{ \mu(Q): Q\in\cD_m, \mu(Q) \le 2^{-(s_1+5\delta)m} \} \lesssim 2^{-\delta m}.
\end{equation}
Let us call a square $Q\in\cD_m$ \emph{heavy} if $\mu(Q) >2^{-(s_1+5\delta)m}$.

Define $m_j = \lfloor (1+\e)^j \rfloor$ for $j\ge 1$ and $m_0=0$. Note that $d_j=m_{j+1}-m_j$ satisfies $|d_j- \e m_j|\lesssim 1$ and $d_j \le m_j+1$ for all $j$, provided $\e<1/2$. Applying Lemma \ref{lem:multiscale-energy} (and the remark after it) to $\mu$ and this sequence, we obtain
\begin{align*}
1&\approx \cE_{s_1}(\mu) \approx \cE_{s_1}(\mu^{(m_k)})\\
&\approx \sum_{j=0}^{k-1} 2^{s_1 m_j } \sum_{Q\in\cD_{m_j}} \mu(Q)^2 \cE_{s_1}(\mu^{Q,(d_j)}) \\
&>\sum_{j=0}^{k-1} 2^{-5 \delta m_j} \sum_{Q\in\cD_{m_j} \text{ is heavy}} \mu(Q) \cE_{s_1}(\mu^{Q,(d_j)}).
\end{align*}
In particular, this shows that for any $j$,
\[
 \sum_{Q\in\cD_{m_j} \text{ is heavy}} \mu(Q) \cE_{s_1}(\mu^{Q,(d_j)}) \lesssim 2^{5\delta m_j},
\]
which, by Markov's inequality and \eqref{eq:light-squares-small-total-mass}, implies that
\begin{equation} \label{eq:local-energy-large}
\sum \{ \mu(Q): Q\in\cD_{m_j}, \cE_{s_1}(\mu^{Q,(d_j)}) \ge 2^{6 \delta m_j} \} \lesssim 2^{-\delta m_j}.
\end{equation}

Since $\hdim(B)>1$, by Frostman's Lemma there exist $s_2\in (1,s-2\delta)$ and $\nu\in\cP([0,1)^2)$, supported on $B$, such that $\cE_{s_2}(\nu)<\infty$. Applying Theorem \ref{thm:good-base-point} (with $s'=s_2$, $s=s_1$) we obtain $j_0\in\N$ and a point $y\in B$ such that $\mu(A_1)\gtrsim 1$, where
\begin{equation} \label{eq:def-A1}
A_1=\left\{ x\in\supp(\mu):  \|\mu^{x,j}_{\theta(x,y)}\|_2^2 \le 2^{\e d_j} \cE_{s_1}(\mu^{x,j}) \text{ for all } j\ge j_0 \right\},
\end{equation}
where we recall that  $\mu^{x,j}= \mu^{D_{m_j}(x),(d_j)}$.

Now suppose $A_2\subset A_1$ satisfies $\mu(A_2)> k^{-2}\mu(A_1)\gtrsim k^{-2}$ for some $k\gg j_0$. Write $\wt{\mu}=\mu_{A_2}$ for simplicity. Since $A_2\subset \supp(\mu)$, there are $\lesssim 2^{m(s_1+4\delta)}$ squares $Q\in\cD_m$ with positive $\wt{\mu}$-mass, so we get that, for any $j$,
\[
\sum \{ \wt{\mu}(Q):Q\in\cD_{m_j}, \wt{\mu}(Q) \le 2^{-(s_1+5\delta)m_j}\} \lesssim 2^{-\delta m_j}.
\]
Using \eqref{eq:almost-exact-dimension}, we deduce that
\[
\sum \{ \wt{\mu}(Q):Q\in\cD_{m_j}, \wt{\mu}(Q) \le 2^{- 4 \delta m_j}\mu(Q) \} \lesssim 2^{- \delta m_j}.
\]
On the other hand, from \eqref{eq:local-energy-large} and $\mu(A_2)\gtrsim k^{-2}$ we also get
\[
\sum \left\{ \wt{\mu}(Q):Q\in\cD_{m_j}, \cE_{s_1}\left(\mu^{Q,(d_j)}\right) \ge 2^{6  \delta m_j} \right\} \lesssim k^2 2^{- \delta m_j}.
\]
Call a square $Q\in\cD_{m_j}$ \emph{good} if $\wt{\mu}(Q) \ge 2^{- 4 \delta m_j}\mu(Q)>0$ and $\cE_{s_1}\left(\mu^{Q,(d_j)}\right) \le 2^{6  \delta m_j}$, and call it \emph{bad} otherwise. We deduce from the last two displayed equations that, if $k/2 \le j\le k$ and $k$ is large enough, then
\begin{equation} \label{eq:mass-bad-squares-small}
\sum \{ \wt{\mu}(Q): Q\in\cD_{m_j} \text{ is bad } \} \le 2^{-\delta m_j/2}.
\end{equation}
Now, note that for any Borel set $S\subset\R^2$,
\[
\wt{\mu}_Q(S) = \frac{\mu(S\cap Q\cap A_2)}{\mu(Q\cap A_2)} \le \frac{\mu(Q)}{\mu(Q\cap A_2)}\mu_Q(S) \lesssim \left(k^2 \frac{\mu(Q)}{\wt{\mu}(Q)}\right)\,\mu_Q(S).
\]
This pointwise domination is preserved under push-forwards and discretizations. In particular,
\[
\wt{\mu}^{Q,(m_j)}_{\theta(x,y)}(z)  \lesssim  \left( k^2 \frac{\mu(Q)}{\wt{\mu}(Q)}\right)  \,\mu^{Q,(m_j)}_{\theta(x,y)}(z)
\]
for all $z$. We deduce that if $Q\in\cD_{m_j}$ is a good square (and always assuming $k/2\le j\le k$), then
\[
\left\|\wt{\mu}^{Q,(m_j)}_{\theta(x,y)}\right\|_2^2  \lesssim 2^{9 \delta m_j} \left\|\mu^{Q,(m_j)}_{\theta(x,y)}\right\|_2^2 .
\]
Using this and taking into account \eqref{eq:def-A1} and the fact that $A_2\subset A_1$, we deduce that
if $x\in Q\cap A_2$, with $Q\in\cD_{m_j}$ a good square, and $k/2\le j\le k$, then
\begin{align*}
\left\|\wt{\mu}^{x,j}_{\theta(x,y)}\right\|_2^2 &\lesssim  2^{9 \delta m_j} \left\|\mu^{x,j}_{\theta(x,y)}\right\|_2^2 \\
&\le 2^{9 \delta m_j} 2^{\e d_j} 2^{6  \delta m_j} \\
&\approx 2^{16\e d_j},
\end{align*}
using that $\delta=\e^2$ and $|d_j-\e m_j|\lesssim 1$ in the last line.

Applying Lemmas \ref{lem:L2-to-entropy} and \ref{lem:discretize-project}, we see that if $Q\in\cD_{m_j}$ is a good square and $k/2\le j<k$, then there is $x_Q\in Q$ such that
\begin{equation} \label{eq:good-square-large-entropy}
H_{d_j}\left( \wt{\mu}^Q_{\theta(x_Q,y)} \right) \ge 1 - 16\e - C/d_j \ge 1-17\e,
\end{equation}
taking $k$ even larger if needed.

To conclude, and making $\e$ smaller and $k$ even larger in terms of $\e$, we invoke Proposition \ref{prop:entropy-of-pinned-dist-measures}:
\begin{align*}
H(\Delta_y\wt{\mu},\cD_{m_k}) &\ge -C k +\sum_{j=0}^{k-1} \sum_{Q\in\cD_{m_j}} \wt{\mu}(Q) H\left(\wt{\mu}^Q_{\theta(y,x_Q)} ,\cD_{d_j}\right)\\
&\ge -\e m_k + \sum_{j=k/2}^{k-1} \sum_{Q\in\cD_{m_j}\text{ good}} \wt{\mu}(Q) H\left(\wt{\mu}^Q_{\theta(y,x_Q)} ,\cD_{d_j}\right)\\
&\ge -\e m_k + \sum_{j=k/2}^{k-1} (1-2^{-\delta m_j/2}) (1-17\e) d_j\\
&\ge -\e m_k + (1-\e)(1-17\e)(m_k-m_{\lfloor k/2\rfloor})\\
&\ge (1-20\e)m_k,
\end{align*}
using \eqref{eq:mass-bad-squares-small} and \eqref{eq:good-square-large-entropy} in the third line. This establishes the proposition if we take $\e\le(1-t)/20$.
\end{proof}


\end{document}